\newtheorem{obs} [subsection]{Remark}
\newtheorem{exm} [subsection]{Example}
\newtheorem{prop}[subsection]{Proposition}
\newtheorem{conj}[subsection]{Conjecture}
\newtheorem{teor}[subsection]{Theorem}
\newtheorem{lema}[subsection]{Lemma}
\newtheorem{cor} [subsection]{Corollary}
\newcommand{\Zng}{$\mathbb Z^n$-graded $S$-module}
\def\sdepth{\operatorname{sdepth}}
\def\depth{\operatorname{depth}}
\def\supp{\operatorname{supp}}
\def\lcm{\operatorname{lcm}}
\begin{document}
\selectlanguage{english}
\frenchspacing

\begin{center}
\textbf{On the Stanley depth of powers of some classes of monomial ideals}
\vspace{10pt}

\large
Mircea Cimpoea\c s
\end{center}
\normalsize

\begin{abstract}
Given arbitrary monomial ideals $I$ and $J$ in polynomial rings $A$ and $B$ over a field $K$, we investigate the Stanley depth of powers of the sum $I+J$, and their quotient rings, in $A\otimes_K B$ in terms of those of $I$ and $J$. Our results can be used to study the asymptotic behavior of the Stanley depth of powers of a monomial ideal. For instance, we solved the case of monomial complete intersection.

\noindent \textbf{Keywords:} Stanley depth, monomial ideal, powers of ideals.

\noindent \textbf{2010 Mathematics Subject Classification:} 13C15, 13P10, 13F20.
\end{abstract}

\section*{Introduction}

Let $K$ be a field and $S=K[x_1,\ldots,x_n]$ the polynomial ring over $K$.
Let $M$ be a \Zng. A \emph{Stanley decomposition} of $M$ is a direct sum $\mathcal D: M = \bigoplus_{i=1}^rm_i K[Z_i]$ as a $\mathbb Z^n$-graded $K$-vector space, where $m_i\in M$ is homogeneous with respect to $\mathbb Z^n$-grading, $Z_i\subset\{x_1,\ldots,x_n\}$ such that $m_i K[Z_i] = \{um_i:\; u\in K[Z_i] \}\subset M$ is a free $K[Z_i]$-submodule of $M$. We define $\sdepth(\mathcal D)=\min_{i=1,\ldots,r} |Z_i|$ and $\sdepth_S(M)=\max\{\sdepth(\mathcal D)|\;\mathcal D$ is a Stanley decomposition of $M\}$. The number $\sdepth_S(M)$ is called the \emph{Stanley depth} of $M$. Herzog, Vladoiu and Zheng show in \cite{hvz} that $\sdepth_S(M)$ can be computed in a finite number of steps if $M=I/J$, where $J\subset I\subset S$ are monomial ideals. In \cite{rin}, Rinaldo give a computer implementation for this algorithm, in the computer algebra system $\mathtt{CoCoA}$ \cite{cocoa}. For a friendly introduction in the thematic of Stanley depth, we refer the reader \cite{her}.

Let $I$ and $J$ be two monomial ideal in polynomial rings $A$ and $B$ over a field $K$. In \cite{trung}, H.\ H.\ Ha, N.\ V.\ Trung and T.\ N.\ Trung studied the behavour of the $\depth$ for powers of $I+J$, in terms of those of $I$ and $J$. Our aim is to do a similar task for the $\sdepth$. However, since there are no homological methods for computing the $\sdepth$, our results are weaker.
In Proposition $2.4$ we show that $\sdepth((I+J)^n/(I+J)^{n+1}) \geq \min_{i+j=n} \{ \sdepth_A(I^i/I^{i+1}) + \sdepth_B(J^j/J^{j+1}) \}$, for all $n\geq 0$. 

Also, $\sdepth((I+J)^n) \geq \min\{ \sdepth(I^n), \sdepth(I^{n-1}J/I^{n}J), \ldots, \sdepth(J^n/IJ^n)\}$ for any $n\geq 1$, see Proposition $2.6$. There are no general results regarding the asymptotic behavior of the numerical functions $n \mapsto \sdepth_B(B/J^n)$, $n \mapsto \sdepth_B(J^n)$ and $n \mapsto \sdepth_B(J^n/J^{n+1})$, like in the case of $\depth$, see for instance \cite{hibi}. If $J\subset B$ is a monomial complete intersection ideal, we prove that $\sdepth(R/(I+J)^n)\geq \min_{i\leq n}\{\sdepth_A(A/I^i) + \dim(B/J)\}$, see Theorem $2.11$. 

In the general case, is difficult to estimate $\sdepth(R/(I+J)^n)$. A partial result for $n=2$ is given in Proposition $2.9$.
If $J\subset B$ is a monomial complete intersection, we prove that $\sdepth_B(B/J^n) = \sdepth_B(J^n/J^{n+1}) = \dim(B/J)$, and we give bounds for $\sdepth_B(J^n)$, see Theorem $2.15$.

\footnotetext[1]{The support from grant ID-PCE-2011-1023 of Romanian Ministry of Education, Research and Innovation is gratefully acknowledged.}

\newpage
\section{Preliminaries}

We consider $I\subset S$ a monomial ideal. We say that $I$, respectively $S/I$, satisfies the \emph{Stanley inequality}, if $\sdepth(I)\geq \depth(I)$, respectively $\sdepth(S/I)\geq \sdepth(S/I)$. Stanley \cite{stan} conjectured in fact that for any ideal $I\subset S$, both $I$ and $S/I$ satisfy these inequalities. This conjecture proved to be false in the case of $S/I$, see \cite{duval}.

In \cite{lukas}, L.\ Katth\"an proposed the following conjecture, which is a weaker form of the Stanley conjecture, and it is open.

\begin{conj} (See \cite[Theorem 3.1]{lukas})

$(1)$ $\sdepth(S/I)\geq \depth(S/I)-1$.

$(2)$ $\sdepth(I)\geq \depth(I)$.
\end{conj}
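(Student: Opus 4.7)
Since the statement is currently an open conjecture, I can only describe a plausible line of attack rather than a complete argument. The overall strategy is to combine the Herzog--Vladoiu--Zheng combinatorial characterization of Stanley depth with an induction on the number of variables, and to use the extra ``$-1$'' in $(1)$ to absorb the obstruction responsible for the known counterexamples to the full Stanley conjecture.

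The first step would be to reduce to the squarefree case. Polarization preserves $\depth$ and, under the standard transport arguments for Stanley decompositions, relates $\sdepth(S/I)$ to the $\sdepth$ of the polarized quotient in a controlled way, so one may essentially assume $I=I_\Delta$ is the Stanley--Reisner ideal of a simplicial complex $\Delta$ on $[n]$. Under this reduction, $\depth(S/I_\Delta)$ is controlled by Reisner's criterion in terms of the reduced cohomology of links in $\Delta$, which is the concrete target one has to match combinatorially.

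The second step is to induct on $n$ by splitting along a vertex $x_i$. The Stanley decomposition
\[
S/I = S/(I,x_i)\oplus x_i\cdot S/(I:x_i),
\]
together with its analogue for $I$ itself, gives the recursion
\[
\sdepth_S(S/I)\geq \min\bigl\{\sdepth(S/(I,x_i)),\; \sdepth(S/(I:x_i))+1\bigr\},
\]
and a parallel inequality for $\sdepth(I)$; both terms on the right live in polynomial rings in $n-1$ variables and are accessible by the inductive hypothesis. Comparing this with the depth-lemma inequality obtained from the short exact sequence $0\to S/(I:x_i)(-e_i)\to S/I\to S/(I,x_i)\to 0$ reduces the problem to selecting a vertex for which the $\sdepth$ and $\depth$ bounds can be aligned, up to the permitted slack of $1$ in part $(1)$.

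The main obstacle is precisely this last matching step. Because the full Stanley conjecture for $S/I$ fails (see \cite{duval}), one cannot in general locate a vertex $x_i$ whose splitting preserves $\sdepth\geq\depth$ on the nose; Katth\"an's conjecture asserts that a slack of one unit always suffices, and vindicating this seems to require a finer structural understanding of the cohomological obstruction---identifying, say, a torsion element or a bad minimal non-face of $\Delta$ responsible for the loss---and then exhibiting a Stanley decomposition whose smallest piece has dimension $\depth(S/I)-1$ by a direct construction. Part $(2)$ is expected to be comparatively more tractable, since $I$ carries richer Stanley decompositions than $S/I$, and several partial results already cover significant classes of monomial ideals; a natural first target is to verify $(2)$ for the classes considered in the present paper (powers of complete intersections) as a sanity check on the inductive framework.
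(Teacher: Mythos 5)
This statement is Katth\"an's conjecture, which the paper itself records as open (``and it is open''); the paper offers no proof, so there is nothing to compare your argument against. You correctly recognize this and present only a plan of attack, which is honest, but the plan has a genuine gap sitting exactly where the content of the conjecture lives. The two preparatory steps are individually sound: polarization is known to be compatible with Stanley depth, so reducing to the squarefree case is legitimate, and the $\mathbb Z^n$-graded splitting $S/I = S/(I,x_i)\oplus x_i\, S/(I:x_i)$ does yield the recursion $\sdepth_S(S/I)\geq \min\{\sdepth(S/(I,x_i)),\ \sdepth(S/(I:x_i))+1\}$ together with the parallel Depth Lemma estimate from the exact sequence $0\to (S/(I:x_i))(-e_i)\to S/I\to S/(I,x_i)\to 0$.

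The problem is that the induction cannot be closed from these ingredients. The Depth Lemma controls $\depth(S/I)$ only by inequalities in one direction, and nothing guarantees the existence of a vertex $x_i$ for which the $\sdepth$ recursion and the $\depth$ estimates align within the permitted slack of one unit; the counterexample of Duval et al.\ \cite{duval} shows that with slack zero no such alignment is possible in general, and no argument is known that slack one always suffices --- if it were, the conjecture would be a theorem. The ``matching step'' you defer is therefore not a technical detail to be filled in later but the entire difficulty, and the cohomological heuristics you invoke (Reisner's criterion, a ``bad minimal non-face'') do not currently translate into a construction of a Stanley decomposition of the required depth. The same objection applies to part $(2)$: verifying it for complete intersections and their powers, as the present paper effectively does, is a consistency check, not progress toward the general case. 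The only defensible disposition of this statement is the one the author takes: state it as a conjecture with the reference to \cite{lukas}.
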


Another conjecture, stated by Herzog in \cite{her}, is the following.

\begin{conj}
$\sdepth(I)\geq \sdepth(S/I)+1$.
\end{conj}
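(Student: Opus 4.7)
The plan is to work inside the combinatorial framework of Herzog, Vladoiu, and Zheng \cite{hvz}, which encodes Stanley depth via interval partitions of a finite poset. Let $g$ be the least common multiple of the minimal monomial generators of $I$, and let $P_g$ denote the poset of divisors of $g$ ordered by divisibility. Then $P_g = P_I \sqcup P_{S/I}$, where $P_I$ consists of those divisors of $g$ that lie in $I$; by the theorem of \cite{hvz}, both $\sdepth(I)$ and $\sdepth(S/I)$ are realized as the maximum of a combinatorial invariant ranging over interval partitions of $P_I$ and $P_{S/I}$, respectively. The goal is to start from an optimal interval partition $\mathcal{P}_{S/I}$ that achieves $d = \sdepth(S/I)$ and to construct an interval partition $\mathcal{P}_I$ of $P_I$ whose Stanley depth is at least $d+1$.

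First I would focus on the boundary of $P_I$: for each minimal generator $u$ of $I$ and each variable $x_i$ dividing $u$, the monomial $u/x_i$ lies in $P_{S/I}$ while $u$ lies in $P_I$. The idea is a local lifting: to each interval $[a,b]\subset P_{S/I}$ whose top element $b$ sits ``right below'' $P_I$ (meaning $x_i b \in I$ for some $i \notin \supp(b)$), one associates an interval of $P_I$ obtained by multiplying through by a suitable $x_i$, absorbing $x_i$ into the free part of the interval so that its combinatorial invariant gains one. Intervals of $P_{S/I}$ whose top element lies deep inside the interior of $P_{S/I}$ contribute nothing to the boundary, while regions of $P_I$ far from the boundary would be handled separately, probably by an inductive argument.

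The principal obstacle is global consistency. The locally lifted intervals can overlap (two different boundary tops might try to claim the same element of $P_I$ after multiplication), and, worse, a priori not every element of $P_I$ is reached by any lift from $\mathcal{P}_{S/I}$, so some elements of $P_I$ must be partitioned ``from scratch'' without inheriting the Stanley depth bound. Reconciling these two issues requires a genuinely new combinatorial idea, and indeed every known partial result — for intersections of prime ideals, squarefree ideals of small support, polymatroidal ideals, and the like — uses extra structure to fix a canonical lift. I would therefore first test the construction on ideals generated in a single degree, and then attempt induction on $n$ by splitting along a variable $x_n$ through the standard recursion $\sdepth(I)\geq \min\{\sdepth(I : x_n)+1,\ \sdepth((I,x_n))\}$ and comparing it with the analogous recursion for $S/I$. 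The hard part, and the reason this conjecture has remained open, is to ensure that the extra $+1$ survives the induction step, since the corresponding control on $S/I$ is weaker by precisely one and the slack can be lost in either branch of the split.
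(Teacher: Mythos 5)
This statement is an open conjecture (due to Herzog, \cite{her}), not a theorem of the paper: the paper records it as Conjecture 1.2 and offers no proof, only the remarks that Ichim verified it for $n\leq 5$ and that it holds for monomial complete intersections. So there is no proof in the paper to compare yours against, and your proposal does not close the gap either --- you say so yourself when you write that reconciling the overlapping lifts ``requires a genuinely new combinatorial idea.'' Concretely, the argument breaks at exactly the two points you identify: (i) two distinct boundary intervals of the optimal partition of $P_{S/I}$ can compete for the same element of $P_I$ after multiplication by a variable, and nothing in the HVZ framework forces a consistent choice; and (ii) the elements of $P_I$ not reached by any lift must be covered by intervals constructed independently, and there is no mechanism guaranteeing those intervals have dimension at least $\sdepth(S/I)+1$. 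The proposed induction via $\sdepth(I)\geq\min\{\sdepth(I:x_n)+1,\ \sdepth((I,x_n))\}$ has the same defect: in the branch $(I,x_n)$ the ``$+1$'' is not present, and the corresponding recursion for $S/I$ does not supply the missing unit. What you have written is an accurate diagnosis of why the conjecture is open, not a proof of it; if you intend to cite this statement, cite it as a conjecture, or restrict to one of the known special cases (e.g.\ $n\leq 5$, or $I$ a monomial complete intersection via \cite[Theorem 2.4]{mirc}) where a complete argument exists.
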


B.\ Ichim proved that this conjecture is true for $n\leq 5$, see \cite[Theorem 1.2]{ichim}. Also, this result is true in some other particular cases, for example, when $I$ is a monomial complete intersection, see \cite[Theorem 2.4]{mirc}.

Now, we recall the well known Depth Lemma, see for instance \cite[Lemma 1.3.9]{real}. 

\begin{lema}(Depth Lemma)
If $0 \rightarrow U \rightarrow M \rightarrow N \rightarrow 0$ is a short exact sequence of modules over a local ring $S$, or a Noetherian graded ring with $S_0$ local, then

a) $\depth M \geq \min\{\depth N,\depth U\}$.

b) $\depth U \geq \min\{\depth M,\depth N +1 \}$.

c) $\depth N\geq \min\{\depth U - 1,\depth M\}$.
\end{lema}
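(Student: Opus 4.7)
The plan is to reduce all three inequalities to vanishing properties of local cohomology via the long exact sequence induced by the given short exact sequence. Recall that over a Noetherian local (or $*$-local graded) ring $(S,\mathfrak{m})$, any finitely generated $S$-module $M$ satisfies $\depth M = \min\{i \geq 0 : H^i_{\mathfrak{m}}(M) \neq 0\}$, with the convention that the minimum over the empty set is $+\infty$. This converts each statement about depth into a statement about the vanishing range of local cohomology.

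Applying the functors $H^i_{\mathfrak{m}}(-)$ to $0 \to U \to M \to N \to 0$ produces the long exact sequence
\[
\cdots \to H^{i-1}_{\mathfrak{m}}(N) \to H^i_{\mathfrak{m}}(U) \to H^i_{\mathfrak{m}}(M) \to H^i_{\mathfrak{m}}(N) \to H^{i+1}_{\mathfrak{m}}(U) \to \cdots.
\]
For (a), if $i < \min\{\depth U, \depth N\}$ then both neighbors of $H^i_{\mathfrak{m}}(M)$ in the sequence vanish, forcing $H^i_{\mathfrak{m}}(M) = 0$ and hence $\depth M \geq \min\{\depth U, \depth N\}$. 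For (b), if $i < \min\{\depth M, \depth N + 1\}$ then $H^{i-1}_{\mathfrak{m}}(N) = 0$ and $H^i_{\mathfrak{m}}(M) = 0$, so exactness forces $H^i_{\mathfrak{m}}(U) = 0$. For (c), if $i < \min\{\depth U - 1, \depth M\}$, equivalently $i + 1 < \depth U$ and $i < \depth M$, then $H^{i+1}_{\mathfrak{m}}(U) = 0$ and $H^i_{\mathfrak{m}}(M) = 0$, so exactness forces $H^i_{\mathfrak{m}}(N) = 0$.

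There is essentially no serious obstacle: once the cohomological characterization of depth is in place, each inequality is a one-line diagram chase on three consecutive terms of the long exact sequence. The only bookkeeping concerns the conventions when one of $U$, $M$, $N$ is zero (its depth is then $+\infty$ and the inequality degenerates trivially), and, in the graded setting, making sure one uses the graded maximal ideal and graded local cohomology so that the characterization of depth still applies, which is standard for Noetherian graded rings with $S_0$ local. An equivalent alternative is to replace $H^i_{\mathfrak{m}}(-)$ by $\operatorname{Ext}^i_S(S/\mathfrak{m}, -)$ throughout; the argument is formally identical.
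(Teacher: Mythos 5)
Your proof is correct. The paper itself does not prove this lemma --- it simply recalls it as well known, citing Villarreal's \emph{Monomial algebras} (Lemma 1.3.9), where the argument is the same formal diagram chase you give, carried out with $\operatorname{Ext}^i_S(S/\mathfrak{m},-)$ via Rees' characterization of depth rather than with local cohomology; you explicitly note this equivalent variant yourself. All three chases on the long exact sequence are set up with the correct index shifts, and you correctly flag the only real hypotheses needed, namely finite generation of the modules (so that the cohomological characterization of depth applies) and the conventions for the zero module and for the graded $*$-local setting. Nothing is missing.
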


In \cite{asia}, Asia Rauf proved the analog of Lemma $1.3(a)$ for $\sdepth$:

\begin{lema}
Let $0 \rightarrow U \rightarrow M \rightarrow N \rightarrow 0$ be a short exact sequence of $\mathbb Z^n$-graded $S$-modules. Then:
\[ \sdepth(M) \geq \min\{\sdepth(U),\sdepth(N) \}. \]
\end{lema}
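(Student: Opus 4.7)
The plan is to build a Stanley decomposition of $M$ explicitly, by combining one for $U$ with a lifted version of one for $N$, and to show its Stanley depth is at least $\min\{\sdepth(U),\sdepth(N)\}$.

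First, I would fix Stanley decompositions realizing the Stanley depths of the two outer terms:
$$\mathcal D_U:\; U=\bigoplus_{i=1}^r u_iK[Z_i],\qquad \mathcal D_N:\; N=\bigoplus_{j=1}^s \bar v_j K[W_j],$$
with $|Z_i|\geq\sdepth(U)$ and $|W_j|\geq\sdepth(N)$. Identifying $U$ with its image in $M$ under the injection, and denoting the surjection by $\pi:M\to N$, the next step is to lift each homogeneous generator $\bar v_j\in N$ to a homogeneous element $v_j\in M$ with $\pi(v_j)=\bar v_j$; this is possible because the sequence is a sequence of $\mathbb Z^n$-graded modules and $\pi$ is a graded surjection.

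The central claim is then
$$\mathcal D:\; M=\bigoplus_{i=1}^r u_iK[Z_i]\ \oplus\ \bigoplus_{j=1}^s v_jK[W_j],$$
as $\mathbb Z^n$-graded $K$-vector spaces, with each summand a free module over the corresponding $K[Z_i]$ or $K[W_j]$. For the covering property, given $m\in M$, expand $\pi(m)=\sum_{j,\alpha}c_{j,\alpha}\bar v_j x^\alpha$ with $x^\alpha\in K[W_j]$, form $m':=\sum c_{j,\alpha}v_jx^\alpha\in M$, and note that $m-m'\in U$ so is handled by $\mathcal D_U$. For directness and freeness, suppose $\sum_i u_ip_i+\sum_j v_jq_j=0$ in $M$; apply $\pi$ to get $\sum_j\bar v_jq_j=0$ in $N$, whence by $\mathcal D_N$ each $q_j=0$; the remaining relation $\sum_i u_ip_i=0$ sits in $U$ and forces each $p_i=0$ by $\mathcal D_U$. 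The same argument shows each $v_jK[W_j]\subset M$ is free: if $v_jq=0$ then $\bar v_jq=0$ in $N$, hence $q=0$.

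Once $\mathcal D$ is verified to be a Stanley decomposition of $M$, we have $\sdepth(\mathcal D)=\min\{\min_i|Z_i|,\min_j|W_j|\}\geq\min\{\sdepth(U),\sdepth(N)\}$, and taking the maximum over all Stanley decompositions of $M$ yields the stated inequality. The one delicate point, and the main thing to watch, is the homogeneous liftability of the $\bar v_j$ and the preservation of the $\mathbb Z^n$-grading throughout; once that is in place, the rest is a clean verification of direct-sum and freeness conditions.
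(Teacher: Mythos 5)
Your argument is correct: the homogeneous lifting of the $\bar v_j$ is legitimate because $\pi$ is a degree-preserving surjection of $\mathbb Z^n$-graded modules (so it is surjective on each graded component), and your covering, directness and freeness verifications all go through. The paper itself gives no proof of this lemma --- it is quoted from Rauf \cite{asia} --- and your construction (a Stanley decomposition of $U$ together with lifted Stanley spaces from $N=M/U$) is exactly the standard argument from that source, so the two approaches coincide.
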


Let $R=K[y_1,\ldots,y_m]$ be a polynomial ring. Let $M$ be a graded $S$-module and $N$ be a graded $R$-module. It is well known, see for instance \cite[Lemma 2.5]{trung}, that: $$\depth(M\otimes_K N)=\depth(M) + \depth(N).$$ 

W.\ Bruns, C.\ Krattenthaler, and J.\ Uliczka proved in \cite{bruns} a similar result for $\sdepth$: 

\begin{lema}\cite[Proposition 2.10]{bruns}
\[ \sdepth(M\otimes_K N) \geq \sdepth(M) + \sdepth(N) \]
\end{lema}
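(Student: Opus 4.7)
The plan is to build an explicit Stanley decomposition of $M\otimes_K N$ from Stanley decompositions of $M$ and of $N$, and then read off the Stanley depth. Concretely, fix Stanley decompositions
\[
\mathcal D_M:\ M=\bigoplus_{i=1}^r m_i K[Z_i],\qquad \mathcal D_N:\ N=\bigoplus_{j=1}^s n_j K[W_j],
\]
with $Z_i\subset\{x_1,\ldots,x_n\}$ and $W_j\subset\{y_1,\ldots,y_m\}$, and such that $\sdepth(\mathcal D_M)=\sdepth_S(M)$ and $\sdepth(\mathcal D_N)=\sdepth_R(N)$. The candidate decomposition is
\[
\mathcal D:\ M\otimes_K N \;=\; \bigoplus_{i,j}(m_i\otimes n_j)\,K[Z_i\cup W_j].
\]

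First I would check that each summand is a free $K[Z_i\cup W_j]$-module of rank one. Since the $x$-variables and $y$-variables are disjoint, $K[Z_i\cup W_j]\cong K[Z_i]\otimes_K K[W_j]$, and there is an isomorphism of $\mathbb Z^{n+m}$-graded modules
\[
(m_i\otimes n_j)\,K[Z_i\cup W_j]\;\cong\;\bigl(m_i K[Z_i]\bigr)\otimes_K\bigl(n_j K[W_j]\bigr),
\]
so freeness follows from the freeness of each factor. Next, because tensor product over the field $K$ is exact and commutes with arbitrary direct sums, the equalities in $\mathcal D_M$ and $\mathcal D_N$ combine to give the direct sum decomposition of $M\otimes_K N$ as a $\mathbb Z^{n+m}$-graded $K$-vector space claimed in $\mathcal D$. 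Thus $\mathcal D$ is a bona fide Stanley decomposition of $M\otimes_K N$ viewed as a $\mathbb Z^{n+m}$-graded $S\otimes_K R$-module.

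Finally, since $Z_i\cap W_j=\emptyset$, $|Z_i\cup W_j|=|Z_i|+|W_j|$, so
\[
\sdepth(\mathcal D)=\min_{i,j}\bigl(|Z_i|+|W_j|\bigr)=\min_i|Z_i|+\min_j|W_j|=\sdepth_S(M)+\sdepth_R(N).
\]
Taking this lower bound as a particular value in the definition of $\sdepth_{S\otimes_K R}(M\otimes_K N)$ gives the desired inequality.

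The main point to be careful about is the first step: verifying that the proposed sum really is direct as a $\mathbb Z^{n+m}$-graded $K$-vector space. This is where the assumption that the two sets of variables are disjoint is essential, and where one uses that $K$ is a field so that $-\otimes_K-$ preserves direct sums on both sides without any Tor obstruction. Once that formal verification is in place, the depth count is immediate.
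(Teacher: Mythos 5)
Your argument is correct and complete: the paper itself gives no proof of this lemma (it is quoted from Bruns--Krattenthaler--Uliczka, Proposition 2.10), and your construction --- tensoring optimal Stanley decompositions of $M$ and $N$ to obtain the decomposition $\bigoplus_{i,j}(m_i\otimes n_j)K[Z_i\cup W_j]$ and using $|Z_i\cup W_j|=|Z_i|+|W_j|$ --- is exactly the standard argument given in that reference. Nothing is missing; the points you flag (directness of the sum via exactness of $-\otimes_K-$, freeness of each summand over $K[Z_i]\otimes_K K[W_j]$, and homogeneity of $m_i\otimes n_j$ in the $\mathbb Z^{n+m}$-grading) are precisely the ones that need checking, and they all go through because the two variable sets are disjoint and $K$ is a field.
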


As a consequence of Lemma $1.5$, if $\sdepth(M)\geq\depth(M)$ and $\sdepth(N)\geq\depth(N)$, then $\sdepth(M\otimes_K N)\geq \depth(M\otimes_K N)$.

\newpage
\section{Main results}

Let $r,s\geq 1$ be two integers, and let $A=K[x_1,\ldots,x_r]$ and $B=K[x_{r+1},\ldots,x_{r+s}]$ be polynomial rings over a field $K$, and let $R=K[x_1,\ldots,x_{r+s}]$. Let $I\subset A$ and $J\subset B$ be nonzero proper monomial ideals. In order to simplify the notations, we will denote also by $I$ and $J$, their extensions in $R$. We recall the following results of L.\ T.\ Hoa and N.\ D.\ Ham.

\begin{lema} \cite[Lemma 1.1 and 3.2]{hoa}
(a) $IJ = I\cap J$.

(b) $\depth(R/IJ)=\depth_A(A/I)+\depth_B(B/J)+1$.
\end{lema}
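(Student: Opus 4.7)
\medskip

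For part (a), the plan is to argue by the monomial structure: the inclusion $IJ \subseteq I \cap J$ is automatic, and for the reverse, any monomial $w \in I \cap J$ is divisible by some monomial generator $u \in I$ (involving only $x_1,\ldots,x_r$) and some monomial generator $v \in J$ (involving only $x_{r+1},\ldots,x_{r+s}$). Since $u$ and $v$ use disjoint variables, $\gcd(u,v)=1$, and hence $uv \mid w$, so $w \in IJ$.

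For part (b), I would exploit the standard short exact sequence
\[
0 \longrightarrow R/(I \cap J) \longrightarrow R/I \oplus R/J \longrightarrow R/(I+J) \longrightarrow 0.
\]
Using the identifications $R/I \cong (A/I) \otimes_K B$, $R/J \cong A \otimes_K (B/J)$ and $R/(I+J) \cong (A/I) \otimes_K (B/J)$, together with the tensor product formula $\depth(M \otimes_K N) = \depth M + \depth N$ recalled before Lemma 1.5, I get
\[
\depth R/(I+J) = \depth_A(A/I) + \depth_B(B/J),
\]
$\depth R/I = \depth_A(A/I) + s$ and $\depth R/J = r + \depth_B(B/J)$.

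The key numerical observation is that since $I$ and $J$ are nonzero proper monomial ideals, $\depth_A(A/I) \leq r-1$ and $\depth_B(B/J) \leq s-1$; consequently both $\depth R/I$ and $\depth R/J$ are at least $\depth R/(I+J) + 1$. Applying the Depth Lemma (Lemma 1.3) to the short exact sequence then yields the desired equality: part (b) gives
\[
\depth R/(I\cap J) \geq \min\{\depth(R/I\oplus R/J),\ \depth R/(I+J) + 1\} = \depth_A(A/I) + \depth_B(B/J) + 1,
\]
while part (c), combined with the strict inequality $\depth(R/I \oplus R/J) > \depth R/(I+J)$, forces $\depth R/(I\cap J) \leq \depth R/(I+J) + 1$. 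Combining these bounds with part (a) delivers the formula. The main obstacle is not technical but simply to verify carefully the strict inequalities $\depth_A(A/I) < r$ and $\depth_B(B/J) < s$, which are what allow one to single out the $\depth R/(I+J)+1$ term in the Depth Lemma estimates.
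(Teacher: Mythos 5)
Your argument is correct, and it is worth noting that the paper itself offers no proof of this statement at all: Lemma 2.1 is simply quoted from Hoa and Ham \cite{hoa}, so there is nothing in the text to compare your derivation against line by line. What you supply is a complete, self-contained proof using only tools already present in the paper. Part (a) is the standard monomial-ideal argument (a monomial of $I\cap J$ is divisible by $\lcm(u,v)=uv$ for generators $u\in G(I)$, $v\in G(J)$ with disjoint supports), and this is essentially the only reasonable route. For part (b), your use of the Mayer--Vietoris sequence
\[
0 \longrightarrow R/(I\cap J)\longrightarrow R/I\oplus R/J\longrightarrow R/(I+J)\longrightarrow 0
\]
together with the tensor-product depth formula and parts (b) and (c) of the Depth Lemma is sound: the two key numerical facts, $\depth_A(A/I)\le r-1$ and $\depth_B(B/J)\le s-1$ (which hold because $I$ and $J$ are nonzero and proper, so $\dim(A/I)<r$ and $\dim(B/J)<s$), do force $\depth(R/I\oplus R/J)\ge \depth(R/(I+J))+1$, after which Depth Lemma (b) gives the lower bound and Depth Lemma (c) rules out $\depth(R/(I\cap J))>\depth(R/(I+J))+1$. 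In \cite{hoa} the result is obtained in the broader context of mixed products of ideals; your proof is more elementary and has the advantage of fitting exactly the hypotheses and the toolkit (Lemma 1.3 and the formula $\depth(M\otimes_K N)=\depth M+\depth N$) that this paper has already set up.
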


As a direct consequence of Lemma $2.1$ and \cite[Theorem 1.2]{mirci}, we get the following result:

\begin{prop}
$(1)$ $\sdepth(IJ)\geq \sdepth_A(I)+\sdepth_B(J)$.

$(2)$ $\sdepth(R/I)\geq \sdepth(R/IJ) \geq \min \{\sdepth(R/I), \sdepth_B(B/J) + \sdepth_A(I) \}$

$(3)$ $\sdepth(R/J)\geq \sdepth(R/IJ) \geq \min \{\sdepth(R/J), \sdepth_A(A/I) + \sdepth_A(J) \}$

$(4)$ If $I$ and $J$ satisfy the Stanley inequality, then $IJ$ also satisfy the Stanley inequality.

$(5)$ If $1.2$ holds for $I$ or $J$, and $A/I$ and $B/J$ satisfy the Stanley inequality, then $R/IJ$ satisfies the Stanley inequality.

$(6)$ If $I$ and $J$ satisfy $1.1(2)$ and $A/I$ and $B/J$ satisfy $1.1(1)$, then $R/IJ$ satisfies $1.1(1)$.
\end{prop}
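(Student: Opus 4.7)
The heart of the argument is the observation that because $I\subset A$ and $J\subset B$ involve disjoint sets of variables, the product $IJ$ coincides (by Lemma 2.1(a)) with $I\cap J$ and, as an $R$-submodule, with $I\otimes_K J$; similarly $I/IJ\cong I\otimes_K (B/J)$ and $J/IJ\cong (A/I)\otimes_K J$. The plan is to combine these tensor factorizations with the two main tools from the preliminaries: Rauf's short-exact-sequence bound (Lemma 1.4) and the Bruns-Krattenthaler-Uliczka inequality (Lemma 1.5). Part (1) is then one line: $\sdepth(IJ)=\sdepth(I\otimes_K J)\geq \sdepth_A(I)+\sdepth_B(J)$ by Lemma 1.5. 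The right-hand inequality in (2) follows by feeding the short exact sequence
\[
0\longrightarrow I/IJ\longrightarrow R/IJ\longrightarrow R/I\longrightarrow 0
\]
into Lemma 1.4 and bounding $\sdepth(I/IJ)\geq \sdepth_A(I)+\sdepth_B(B/J)$ via Lemma 1.5; part (3) is entirely symmetric (the $\sdepth_A(J)$ in the stated $\min$ should read $\sdepth_B(J)$).

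For (4)--(6) the plan is purely arithmetic, driven by the depth formula in Lemma 2.1(b). One has $\depth(R/IJ)=\depth_A(A/I)+\depth_B(B/J)+1$, and the short exact sequence $0\to IJ\to R\to R/IJ\to 0$ upgrades this to $\depth(IJ)=\depth_A(I)+\depth_B(J)$. Combining with (1) yields (4) at once. For (5) and (6), I would feed the lower bound in (2) (symmetrically (3)) into a case split on which argument of the $\min$ is smaller, relying on two subsidiary facts: $\sdepth_R(R/I)\geq \sdepth_A(A/I)+s$, from Lemma 1.5 applied to $R/I\cong (A/I)\otimes_K B$, and $s\geq \depth_B(B/J)+1$, since $J$ is proper nonzero (so $\dim(B/J)\leq s-1$). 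Under the Stanley inequalities for $A/I$ and $B/J$ together with Conjecture 1.2 for $I$ or $J$, each branch of the $\min$ is at least $\depth(R/IJ)$, giving (5); the same accounting under the 1.1(1) and 1.1(2) hypotheses produces $\depth(R/IJ)-1$, as required for (6).

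The step I expect to be the main obstacle is the pair of left-hand inequalities $\sdepth(R/I)\geq \sdepth(R/IJ)$ and $\sdepth(R/J)\geq \sdepth(R/IJ)$ in (2) and (3). These are not formal consequences of Lemma 1.4 --- the Stanley-depth analog of part (c) of the Depth Lemma is still open --- nor of Lemma 1.5, so the proof must invoke \cite[Theorem 1.2]{mirci} as a black box to supply precisely this monotonicity. Everything else reduces cleanly to the tensor-product identifications above together with the depth formula of Lemma 2.1, which is why the proposition can indeed be stated as a ``direct consequence''.
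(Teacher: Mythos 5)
Your proposal is correct and follows the same route the paper intends: the paper offers no argument beyond citing Lemma 2.1 and \cite[Theorem 1.2]{mirci}, and your tensor-product identifications of $IJ$, $I/IJ$ and $J^{}/IJ$ combined with Lemmas 1.4 and 1.5 reconstruct exactly what that citation supplies, while the arithmetic for (4)--(6) rests on Lemma 2.1(b) as you describe (including the correct observation that $\sdepth_A(J)$ in (3) is a typo for $\sdepth_B(J)$). The one step you leave to the black box, $\sdepth(R/I)\geq \sdepth(R/IJ)$, is indeed what the cited theorem provides; note it also follows from the standard fact that $\sdepth(R/(L:v))\geq \sdepth(R/L)$ for a monomial $v$, applied with $L=IJ$ and $v\in J$, since then $(IJ:v)=I$.
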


H.\ H.\ Ha, N.\ V.\ Trung and T.\ N.\ Trung proved in \cite{trung} the following result.

\begin{prop}
$(I+J)^n/(I+J)^{n+1} = \bigoplus_{i+j=n} (I^i/I^{i+1})\otimes_K (J^j/J^{j+1})$, for all $n\geq 0$. 
\end{prop}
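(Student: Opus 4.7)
The plan is to verify the claim by exhibiting a natural surjection onto $(I+J)^n/(I+J)^{n+1}$ from the proposed direct sum and then checking bijectivity on monomial $K$-bases. The starting point is the binomial-type expansion $(I+J)^n = \sum_{i+j=n} I^i J^j$, which holds in any commutative ring, together with the disjoint-variable structure $R = A \otimes_K B$: for any monomial $w = uv$ with $u \in A$ and $v \in B$, one has $w \in I^iJ^j$ iff $u \in I^i$ and $v \in J^j$, because $I^iJ^j$ is a monomial ideal generated by products $mn$ with $m$ a generator of $I^i$ and $n$ a generator of $J^j$. Consequently, as $K$-subspaces of $R$, I can identify $I^iJ^j = I^i \otimes_K J^j$, and the same holds for $I^{i+1}J^j$ and $I^iJ^{j+1}$.

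The sum map $\Phi : \bigoplus_{i+j=n} I^iJ^j \to (I+J)^n$ is surjective by the binomial expansion, and because each $I^{i+1}J^j + I^iJ^{j+1}$ is contained in $(I+J)^{n+1}$, composing with the quotient projection factors through a still-surjective map
\[
\bar\Phi : \bigoplus_{i+j=n} I^iJ^j/(I^{i+1}J^j + I^iJ^{j+1}) \;\longrightarrow\; (I+J)^n/(I+J)^{n+1}.
\]

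To see $\bar\Phi$ is injective, I would match monomial $K$-bases on both sides. Since all ideals involved are monomial, a monomial $w = uv$ lies in $(I+J)^n$ iff $a + b \geq n$, where $a = \max\{k : u \in I^k\}$ and $b = \max\{k : v \in J^k\}$; and $w \notin (I+J)^{n+1}$ iff $a + b \leq n$. So the monomial basis of $(I+J)^n/(I+J)^{n+1}$ is indexed by such pairs $(u,v)$ with $a+b=n$. For each such $w$, the class of $w$ is nonzero only in the $(a,b)$-summand on the left, since in any other summand $(i,j)$ with $i+j=n$ one has either $i > a$ (so $u \notin I^i$) or $j > b$ (so $v \notin J^j$); and in the $(a,b)$-summand the class of $w$ is nonzero because $u \notin I^{a+1}$ and $v \notin J^{b+1}$. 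Thus $\bar\Phi$ carries the disjoint union of the monomial bases of the summands bijectively to that of the target.

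Finally, to rewrite each summand in tensor form, I would invoke the elementary identity $(V \otimes_K W)/(V' \otimes_K W + V \otimes_K W') = (V/V') \otimes_K (W/W')$ valid for $K$-subspaces $V' \subseteq V$ and $W' \subseteq W$, applied with $V = I^i$, $V' = I^{i+1}$, $W = J^j$, $W' = J^{j+1}$, to obtain $I^iJ^j/(I^{i+1}J^j + I^iJ^{j+1}) \cong (I^i/I^{i+1}) \otimes_K (J^j/J^{j+1})$. There is no substantive obstacle in the argument; the only point to keep in mind is the monomial-ideal hypothesis in disjoint variable sets, which is exactly what ensures the clean identification $I^iJ^j = I^i \otimes_K J^j$ and makes the basis-comparison step go through.
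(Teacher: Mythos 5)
Your argument is correct and complete. Note, however, that the paper offers no proof of this proposition at all: it is quoted verbatim from Ha--Trung--Trung \cite{trung}, so there is no in-paper argument to compare against. In the cited source the decomposition is established for arbitrary proper ideals $I\subset A$, $J\subset B$ (not just monomial ones) by filtering $(I+J)^n$ through the ideals $Q_i=\sum_{j\le i}I^{n-j}J^j$ and identifying the successive quotients via Tor-independence of $A$- and $B$-modules over $K$; this is the same filtration the paper reuses later, before Proposition 2.6. Your route is genuinely different and more elementary: you exploit the monomial hypothesis to identify $I^iJ^j$ with $I^i\otimes_K J^j$ on monomial bases, observe that a monomial $uv$ survives in exactly one summand, namely the one indexed by $(a,b)$ with $a=\max\{k: u\in I^k\}$ and $b=\max\{k: v\in J^k\}$ and only when $a+b=n$, and match bases. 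All the individual steps check out, including the characterization $uv\in(I+J)^n\iff a+b\ge n$, the containment $I^{i+1}J^j+I^iJ^{j+1}\subseteq(I+J)^{n+1}$, and the final vector-space identity $(V\otimes_K W)/(V'\otimes_K W+V\otimes_K W')\cong(V/V')\otimes_K(W/W')$. What you lose relative to the general argument is applicability beyond monomial ideals; what you gain is a short, self-contained proof with no homological input, and the isomorphism you build is visibly $\mathbb Z^{r+s}$-graded and $R$-linear, which is exactly what is needed for the Stanley depth applications that follow.
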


As a direct consequence of Lemma $1.4$, Lemma $1.5$ and Proposition $2.3$, we get:

\begin{prop} For all $n\geq 0$, we have:
$$\sdepth((I+J)^n/(I+J)^{n+1}) \geq \min_{i+j=n} \{ \sdepth_A(I^i/I^{i+1}) + \sdepth_B(J^j/J^{j+1}) \}.\Box$$
\end{prop}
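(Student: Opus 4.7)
The plan is to combine the three ingredients in exactly the order suggested. By Proposition~2.3 the module in question admits the direct sum decomposition
\[
(I+J)^n/(I+J)^{n+1} \;=\; \bigoplus_{i+j=n} (I^i/I^{i+1})\otimes_K (J^j/J^{j+1}),
\]
so I would first bound the Stanley depth of each summand, and then bound the Stanley depth of the total direct sum.

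For the individual summands, the bound is immediate from Lemma~1.5: since $I^i/I^{i+1}$ is a $\mathbb Z^r$-graded $A$-module and $J^j/J^{j+1}$ is a $\mathbb Z^s$-graded $B$-module, with $R=A\otimes_K B$, the Bruns--Krattenthaler--Uliczka inequality gives
\[
\sdepth_R\bigl((I^i/I^{i+1})\otimes_K (J^j/J^{j+1})\bigr)\;\geq\;\sdepth_A(I^i/I^{i+1})+\sdepth_B(J^j/J^{j+1}).
\]

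To pass from the summands to the direct sum, I would iterate Lemma~1.4 (Rauf): for any $\mathbb Z^{r+s}$-graded modules $M_1,\ldots,M_r$ the split short exact sequence $0\to M_1\to \bigoplus_{k=1}^r M_k\to \bigoplus_{k=2}^r M_k\to 0$ combined with an easy induction on the number of summands yields
\[
\sdepth\Bigl(\bigoplus_{k=1}^r M_k\Bigr)\;\geq\;\min_{k=1,\ldots,r}\sdepth(M_k).
\]
Applying this to the $n+1$ summands indexed by $i+j=n$, together with the per-summand bound above, gives exactly
\[
\sdepth\bigl((I+J)^n/(I+J)^{n+1}\bigr)\;\geq\;\min_{i+j=n}\bigl\{\sdepth_A(I^i/I^{i+1})+\sdepth_B(J^j/J^{j+1})\bigr\}.
\]

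There is essentially no obstacle here: the three cited results line up to produce the inequality mechanically. The only mild points to check are bookkeeping items, namely that the tensor decomposition of Proposition~2.3 is a decomposition of $\mathbb Z^{r+s}$-graded $R$-modules (so Lemma~1.5 applies with the correct grading and ambient ring), and that the iterated use of Lemma~1.4 on a finite direct sum is legitimate. Both are standard, so the argument is effectively a one-line concatenation of the three earlier results.
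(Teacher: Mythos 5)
Your argument is correct and is exactly the paper's proof: the paper derives Proposition~2.4 as a direct consequence of Proposition~2.3, Lemma~1.5, and Lemma~1.4, in precisely the order you combine them. You have merely spelled out the routine induction on direct summands that the paper leaves implicit.
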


For a monomial $v\in S$, we denote $\supp(v)=\{x_j\;:\;x_j|v\}$ the \emph{support} of $v$.

\begin{prop}
Let $J\subset B$ be a complete intersection monomial ideal. It holds that \linebreak $\sdepth(J^n/J^{n+1})=\dim(B/J)$, for all $n\geq 0$.
\end{prop}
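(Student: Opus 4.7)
The plan is to exploit the disjoint-support structure of a monomial complete intersection $J=(u_1,\ldots,u_m)\subset B$ in order to decompose $J^n/J^{n+1}$ explicitly as a direct sum of shifted copies of $B/J$, and then to invoke the fact that $\sdepth(B/J)=\dim(B/J)$ for such a $J$.

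First I would prove the following factorization property: every monomial $w\in B$ can be written uniquely as $w=u_1^{a_1}\cdots u_m^{a_m}\cdot v$, where $a_i=\max\{k\geq 0:u_i^k\mid w\}$ and $v$ is a monomial with $v\notin J$. Existence is immediate because the sets $\supp(u_i)$ are pairwise disjoint, so maximal powers of the $u_i$ can be extracted independently; uniqueness is clear since the $a_i$ are intrinsically determined by $w$. Writing $u_\alpha:=u_1^{a_1}\cdots u_m^{a_m}$ and $|\alpha|:=a_1+\cdots+a_m$ for $\alpha=(a_1,\ldots,a_m)\in\mathbb{N}^m$, the monomial $u_\alpha v$ (with $v\notin J$) lies in $J^n\setminus J^{n+1}$ if and only if $|\alpha|=n$.

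This yields a decomposition of $\mathbb Z^n$-graded $B$-modules
\[
J^n/J^{n+1}\ \cong\ \bigoplus_{|\alpha|=n} u_\alpha\cdot (B/J),
\]
in which each summand is $\mathbb Z^n$-graded isomorphic to $B/J$ via the map $b+J\mapsto u_\alpha b + u_\alpha J$ (well defined since $u_\alpha$ is a nonzero-divisor of $B$). Directness of the sum is precisely the uniqueness part of the factorization lemma, and checking that the $B$-module structure on each summand agrees with that of $B/J$ reduces to verifying that, for a variable $x$, the product $x\cdot(u_\alpha v)$ either stays in the $\alpha$-summand (when $xv\notin J$) or becomes zero modulo $J^{n+1}$ (when $xv\in J$).

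Given the decomposition, assembling Stanley decompositions of each summand produces a Stanley decomposition of $J^n/J^{n+1}$ of depth $\sdepth(B/J)$, so $\sdepth(J^n/J^{n+1})\geq\sdepth(B/J)$. To evaluate $\sdepth(B/J)$, I would use the tensor factorization $B/J\cong K[W_0]\otimes_K\bigotimes_{i=1}^m K[W_i]/(u_i)$, where $W_i=\supp(u_i)$ and $W_0$ is the complementary set of variables, together with Lemma $1.5$ and the standard staircase Stanley decomposition showing $\sdepth(K[W_i]/(u_i))=|W_i|-1$; this gives $\sdepth(B/J)\geq|W_0|+\sum_i(|W_i|-1)=\dim(B/J)$, and the reverse bound $\sdepth\leq\dim$ is automatic. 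The reverse bound $\sdepth(J^n/J^{n+1})\leq\dim(J^n/J^{n+1})=\dim(B/J)$ follows similarly, using that $J$ annihilates $J^n/J^{n+1}$ so the latter is a $B/J$-module. The only real technical ingredient is the unique factorization lemma, which is where the disjoint-support hypothesis is used essentially; once it is in hand everything downstream is routine.
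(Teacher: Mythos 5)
Your proof is correct, and it reaches the conclusion by a route that differs from the paper's in both halves. For the lower bound, the paper argues by induction on the number $t$ of generators, splitting off the last generator and invoking Proposition 2.4 (i.e.\ the decomposition $(I+J)^n/(I+J)^{n+1}\cong\bigoplus_{i+j=n}(I^i/I^{i+1})\otimes_K(J^j/J^{j+1})$ together with the tensor inequality of Lemma 1.5); your unique-factorization lemma amounts to fully unwinding that induction, since iterating Proposition 2.3 over the principal ideals $(u_i)$ yields exactly your decomposition $J^n/J^{n+1}\cong\bigoplus_{|\alpha|=n}u_\alpha(B/J)$. What your version buys is more: you exhibit $J^n/J^{n+1}$ as a finite direct sum of degree-shifted copies of $B/J$, which immediately gives both bounds at once and makes the equality $\sdepth(J^n/J^{n+1})=\sdepth(B/J)$ transparent, whereas the paper only extracts the inequality $\geq\dim(B/J)$ from its decomposition. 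For the upper bound, the paper argues directly on an arbitrary Stanley space $uK[Z]\subset J^n/J^{n+1}$, showing each $\supp(v_j)$ must meet the complement of $Z$ (else $uv_j\in uK[Z]\cap J^{n+1}$), hence $|Z|\leq s-t$; you instead invoke the general inequality $\sdepth(M)\leq\dim(M)$ applied to the $B/J$-module $J^n/J^{n+1}$. That inequality is standard (it follows from the Hilbert-series interpretation of a Stanley decomposition and can be found, e.g., in the survey \cite{her}), but it is neither stated nor cited in this paper, so you should supply a reference or a one-line proof; the paper's combinatorial argument is self-contained and, as a bonus, is reused verbatim in the proof of Theorem 2.11(2). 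Minor points: your grading should be $\mathbb Z^s$ (the multigrading of $B$) rather than $\mathbb Z^n$, and in the verification that each summand is a $B$-submodule you should note, as you implicitly do, that $xv\in J$ forces $u_\alpha xv\in J^{n+1}$ precisely because disjointness of supports lets the extra factor of some $u_i$ be split off from $u_\alpha xv$.
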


\begin{proof}
We denote $G(J)=\{ v_1,\ldots,v_t\}$ the set of minimal monomial generators of $J$.
We use induction on $t\geq 1$. If $t=1$, then we set $v=v_1$ and we get $J^n/J^{n+1} = (v^n)/(v^{n+1}) = v^n(S/(v))$. Therefore there is nothing to prove. If $t>1$, we let $B'=K[x_i\;:\;i>r,\;x_i\nmid v_t]$ and $B''=K[x_i\;:\;i>r,
\;x_i| v_t]$. Since $J$ is a complete intersection we get $v_i\in B'$, for $i<t$. 

We write $J=J'+J''$ where $J'=(v_1,\ldots,v_{t-1})\subset B'$ and $J''=(v_t)\subset B''$. 
Using the induction hypothesis we get $\sdepth(J'^n/J'^{n+1}) = \dim(B'/J')$. By Proposition $2.4$, it follows that 
$\sdepth_B(J^n/J^{n+1})\geq \dim(B'/J')+\dim(B''/J'') = \dim(B/J)$.

Let $uK[Z]\subset J^n/J^{n+1}$ be a Stanley space. We claim that for each $1\leq j \leq t$, there exists some $r+1 \leq i_j \leq r+s$ such that $x_{i_j}\in \supp(v_j)\setminus Z$. Indeed, if $\supp(v_j) \subset Z$, then $uv_j^{n+1}\in uK[Z]$ and $uv_j^{n+1}\in J^{n+1}$, a contradiction. Since the monomials $v_j$'s have disjoint supports, it follows that $|Z|\leq s-t=\dim(B/J)$ and therefore $\sdepth_B(B/J^n)\leq \dim(B/J)$, as required.
\end{proof}

Let $Q_i:=\sum_{j=0}^i I^{n-j}J^j$, for $0\leq i\leq n$. Note that there is a chain of ideals:
\[ I^n = Q_0 \subset Q_1 \subset \cdots \subset Q_n=(I+J)^n. \]
According to \cite[Lemma 2.2]{trung}, we have $Q_i/Q_{i-1}\cong I^{n-i}J^i/I^{n-i+1}J^i$, for all $1\leq i\leq n$.
Thus, using repeatedly Lemma $1.4$, we get the following result.

\begin{prop}
For all $n\geq 0$, we have:
$$\sdepth((I+J)^n) \geq \min\{ \sdepth(I^n), \sdepth(I^{n-1}J/I^{n}J), \ldots, \sdepth(J^n/IJ^n)\}. \Box$$
\end{prop}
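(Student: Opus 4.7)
The plan is to follow exactly the setup sketched in the paragraph preceding the statement: exploit the filtration
\[ I^n = Q_0 \subset Q_1 \subset \cdots \subset Q_n = (I+J)^n \]
with successive quotients $Q_i/Q_{i-1} \cong I^{n-i}J^i/I^{n-i+1}J^i$, and feed it into Asia Rauf's Lemma $1.4$. Concretely, for each $1 \leq i \leq n$ I would write the short exact sequence
\[ 0 \longrightarrow Q_{i-1} \longrightarrow Q_i \longrightarrow I^{n-i}J^i/I^{n-i+1}J^i \longrightarrow 0, \]
which is a short exact sequence of $\mathbb{Z}^{r+s}$-graded $R$-modules because the $Q_i$ are monomial ideals. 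Lemma $1.4$ then yields
\[ \sdepth(Q_i) \geq \min\{\sdepth(Q_{i-1}),\ \sdepth(I^{n-i}J^i/I^{n-i+1}J^i)\}. \]

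Next I would induct on $i$. The base case $i = 0$ gives $\sdepth(Q_0) = \sdepth(I^n)$, and the inductive step applies the above inequality, so by an immediate telescoping one obtains
\[ \sdepth(Q_i) \geq \min\bigl\{\sdepth(I^n),\ \sdepth(I^{n-1}J/I^{n}J),\ \ldots,\ \sdepth(I^{n-i}J^i/I^{n-i+1}J^i)\bigr\}. \]
Taking $i = n$ gives $Q_n = (I+J)^n$ and the desired bound. There is no real obstacle here: the isomorphism $Q_i/Q_{i-1} \cong I^{n-i}J^i/I^{n-i+1}J^i$ is quoted from \cite[Lemma 2.2]{trung}, and the rest is a mechanical iteration of Rauf's lemma, which is why the paper simply closes the statement with $\Box$. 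The only thing to double-check is that each $Q_i$ is genuinely a $\mathbb{Z}^{r+s}$-graded submodule (which is automatic since each $I^{n-j}J^j$ is a monomial ideal in $R$), so that Lemma $1.4$ legitimately applies at every step.
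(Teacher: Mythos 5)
Your proposal is correct and is exactly the paper's argument: the paper states the filtration $I^n = Q_0 \subset \cdots \subset Q_n = (I+J)^n$ with $Q_i/Q_{i-1}\cong I^{n-i}J^i/I^{n-i+1}J^i$ (via \cite[Lemma 2.2]{trung}) immediately before the proposition and notes that the result follows by repeated application of Lemma $1.4$, which is precisely your iteration over the short exact sequences $0\rightarrow Q_{i-1}\rightarrow Q_i \rightarrow Q_i/Q_{i-1}\rightarrow 0$. Nothing is missing.
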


\begin{prop} For all $n\geq 0$, we have:
\[ \sdepth((I+J)^{n})\geq \min\{ \sdepth((I+J)^{n+1}), \sdepth((I+J)^n/(I+J)^{n+1}) \}. \]
\[ \sdepth(R/(I+J)^{n+1})\geq \min\{ \sdepth(R/(I+J)^n), \sdepth((I+J)^n/(I+J)^{n+1}) \}. \]
\end{prop}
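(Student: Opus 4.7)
The plan is to derive both inequalities as direct applications of Rauf's Lemma~$1.4$ to two standard short exact sequences of $\mathbb Z^n$-graded $R$-modules built from the filtration $(I+J)^{n+1}\subset (I+J)^n\subset R$.

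For the first inequality, I would write down the short exact sequence
\[ 0 \longrightarrow (I+J)^{n+1} \longrightarrow (I+J)^n \longrightarrow (I+J)^n/(I+J)^{n+1} \longrightarrow 0, \]
which is exact by construction since $(I+J)^{n+1}\subset (I+J)^n$. Applying Lemma~$1.4$ with $U=(I+J)^{n+1}$, $M=(I+J)^n$, and $N=(I+J)^n/(I+J)^{n+1}$ yields exactly
\[ \sdepth((I+J)^n) \geq \min\{\sdepth((I+J)^{n+1}),\, \sdepth((I+J)^n/(I+J)^{n+1})\}. \]

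For the second inequality, I would use the short exact sequence obtained from the Third Isomorphism Theorem,
\[ 0 \longrightarrow (I+J)^n/(I+J)^{n+1} \longrightarrow R/(I+J)^{n+1} \longrightarrow R/(I+J)^n \longrightarrow 0, \]
where the first map is induced by the inclusion $(I+J)^n\hookrightarrow R$ and the second by the projection $R/(I+J)^{n+1}\twoheadrightarrow R/(I+J)^n$. Applying Lemma~$1.4$ again with $U=(I+J)^n/(I+J)^{n+1}$, $M=R/(I+J)^{n+1}$, $N=R/(I+J)^n$ gives the second asserted bound.

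There is essentially no obstacle here: the only thing one must check is that both sequences consist of $\mathbb Z^n$-graded modules and maps, which is clear because all ideals involved are monomial (hence $\mathbb Z^n$-graded) and the maps are either inclusions or natural projections. Thus the proof reduces to exhibiting the two short exact sequences and invoking Lemma~$1.4$ in each case.
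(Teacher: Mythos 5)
Your proof is correct and matches the paper's argument exactly: the author uses the same two short exact sequences and applies Lemma~$1.4$ to each. Nothing further is needed.
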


\begin{proof}
\emph{We consider the short exact sequences: $$0 \rightarrow (I+J)^{n+1} \rightarrow (I+J)^{n} \rightarrow (I+J)^{n}/(I+J)^{n+1} \rightarrow 0\;\;and $$ 
$$0 \rightarrow (I+J)^{n}/ (I+J)^{n+1} \rightarrow R/(I+J)^{n+1} \rightarrow R/(I+J)^{n} \rightarrow 0.$$ }
By Lemma $1.4$, we get the required inequalities.
\end{proof}

\begin{obs}
\emph{If we consider the short exact sequence $0 \rightarrow Q_i/Q_{i-1} \rightarrow R/Q_{i-1} \rightarrow R/Q_i \rightarrow 0$, then, by Lemma $1.4$, we get $\sdepth(R/Q_{i-1})\geq \min\{\sdepth(R/Q_{i}),\sdepth(Q_i/Q_{i-1})\}$, for all $1\leq i \leq n$.}

\emph{Also, from the short exact sequence $0 \rightarrow Q_i/Q_{i-1} \rightarrow R/I^{n-i+1}J^i \rightarrow R/I^{n-i}J^i \rightarrow 0 $
, by Lemma $1.4$, we get $\sdepth(R/I^{n-i+1}J^i)\geq \min\{\sdepth(R/I^{n-i}J^i),\sdepth(Q_i/Q_{i-1})\}$, for all $1\leq i \leq n$.}
\end{obs}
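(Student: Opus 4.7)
The plan is to verify the two short exact sequences indicated in the statement and then apply Lemma $1.4$ to each. Both derivations are essentially formal, so the ``main obstacle'' is really just bookkeeping: one must confirm that the claimed sequences are genuinely short exact sequences of $\mathbb Z^n$-graded modules, and invoke the already-established isomorphism $Q_i/Q_{i-1}\cong I^{n-i}J^i/I^{n-i+1}J^i$ from \cite[Lemma 2.2]{trung} for the second one.

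For the first inequality, I would observe that $Q_{i-1}\subset Q_i$ is an inclusion of monomial ideals inside $R$ (since every summand of $Q_{i-1}$ appears as a summand of $Q_i$), so the third isomorphism theorem yields the short exact sequence
\[ 0 \longrightarrow Q_i/Q_{i-1} \longrightarrow R/Q_{i-1} \longrightarrow R/Q_i \longrightarrow 0 \]
of $\mathbb Z^n$-graded $R$-modules. Then Lemma $1.4$ applied to this sequence gives directly
\[ \sdepth(R/Q_{i-1}) \geq \min\{\sdepth(R/Q_i),\sdepth(Q_i/Q_{i-1})\}, \]
for each $1\le i \le n$.

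For the second inequality, I would use that $I^{n-i+1}J^i\subset I^{n-i}J^i\subset R$ is a chain of monomial ideals, and the third isomorphism theorem therefore produces the short exact sequence
\[ 0 \longrightarrow I^{n-i}J^i/I^{n-i+1}J^i \longrightarrow R/I^{n-i+1}J^i \longrightarrow R/I^{n-i}J^i \longrightarrow 0. \]
Using the isomorphism $Q_i/Q_{i-1}\cong I^{n-i}J^i/I^{n-i+1}J^i$ cited above, one may replace the leftmost term and obtain the sequence stated in the Remark. Another application of Lemma $1.4$ delivers
\[ \sdepth(R/I^{n-i+1}J^i) \geq \min\{\sdepth(R/I^{n-i}J^i),\sdepth(Q_i/Q_{i-1})\}, \]
for each $1\le i\le n$, which completes the verification. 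The only nontrivial ingredient is the isomorphism $Q_i/Q_{i-1}\cong I^{n-i}J^i/I^{n-i+1}J^i$, which is quoted from \cite{trung}; everything else is formal and does not require any analysis of monomial supports or of the structure of $I$ and $J$ individually.
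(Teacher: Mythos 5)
Your argument is correct and coincides with the paper's own reasoning: the Remark is proved exactly by exhibiting the two short exact sequences (the second via the identification $Q_i/Q_{i-1}\cong I^{n-i}J^i/I^{n-i+1}J^i$ from \cite[Lemma 2.2]{trung}) and applying Lemma $1.4$ to each. Nothing is missing.
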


\begin{prop}
$(1)$ If $\sdepth(R/(I^2+IJ))<\sdepth_A(A/I)+\sdepth_B(J^2)$, then: $$\sdepth(R/(I+J)^2)\leq \sdepth_A(A/I)+\sdepth_B(J^2).$$

$(2)$ If $\sdepth(R/I^2)<\sdepth_A(I/I^2)+\sdepth_B(J)$, then: $$\sdepth(R/(I^2+IJ))\leq \sdepth(R/I^2).\;\;\emph{Moreover}\;\; \sdepth_A(A/I)\leq \sdepth_A(A/I^2).$$
\end{prop}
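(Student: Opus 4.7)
My plan is to exploit the chain of ideals $I^2 = Q_0 \subset Q_1 = I^2+IJ \subset Q_2 = (I+J)^2$ introduced before Proposition $2.6$, together with the short exact sequences from Remark $2.8$ applied with $n=2$. Each successive quotient $Q_i/Q_{i-1} \cong I^{2-i}J^i/I^{3-i}J^i$ is isomorphic, as a $\mathbb{Z}^{r+s}$-graded $R$-module, to $(I^{2-i}/I^{3-i})\otimes_K J^i$, because $I\subset A$ and $J\subset B$ live in disjoint sets of variables. Lemma $1.5$ then bounds its Stanley depth from below by the sum of the Stanley depths of the two factors.

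For part $(1)$, I would take $i=2$: $Q_2/Q_1 \cong J^2/IJ^2 \cong (A/I)\otimes_K J^2$, so $\sdepth(J^2/IJ^2) \geq \sdepth_A(A/I) + \sdepth_B(J^2)$ by Lemma $1.5$. The short exact sequence $0 \to J^2/IJ^2 \to R/(I^2+IJ) \to R/(I+J)^2 \to 0$ combined with Lemma $1.4$ gives $\sdepth(R/(I^2+IJ)) \geq \min\{\sdepth(J^2/IJ^2),\sdepth(R/(I+J)^2)\}$. The hypothesis forces $\sdepth(R/(I^2+IJ)) < \sdepth(J^2/IJ^2)$, so the minimum must equal $\sdepth(R/(I+J)^2)$, yielding $\sdepth(R/(I+J)^2) \leq \sdepth(R/(I^2+IJ)) < \sdepth_A(A/I)+\sdepth_B(J^2)$, as required.

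For part $(2)$, I would apply the same template with $i=1$: the quotient $Q_1/Q_0 \cong IJ/I^2J \cong (I/I^2)\otimes_K J$ has Stanley depth at least $\sdepth_A(I/I^2) + \sdepth_B(J)$, and running the argument on $0 \to IJ/I^2J \to R/I^2 \to R/(I^2+IJ) \to 0$ immediately gives $\sdepth(R/(I^2+IJ)) \leq \sdepth(R/I^2)$. For the \emph{Moreover} clause, I would observe that $R/I^2 \cong (A/I^2)\otimes_K B$ because $I$ is extended from $A$, so Lemma $1.5$ yields $\sdepth(R/I^2) \geq \sdepth_A(A/I^2) + s$, where $s=\dim B$. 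Since $\sdepth_B(J)\leq s$, the hypothesis reads $\sdepth_A(A/I^2) + \sdepth_B(J) \leq \sdepth(R/I^2) < \sdepth_A(I/I^2)+\sdepth_B(J)$, so $\sdepth_A(A/I^2) < \sdepth_A(I/I^2)$. Applying Lemma $1.4$ to the $A$-module sequence $0 \to I/I^2 \to A/I^2 \to A/I \to 0$, the bound $\sdepth_A(A/I^2) \geq \min\{\sdepth_A(I/I^2),\sdepth_A(A/I)\}$ together with the strict inequality $\sdepth_A(A/I^2) < \sdepth_A(I/I^2)$ forces the minimum to be $\sdepth_A(A/I)$, hence $\sdepth_A(A/I) \leq \sdepth_A(A/I^2)$.

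The main obstacle, as I see it, is minimal: every step is driven by the strictness in the hypothesis, which lets us eliminate one of the two terms in the minimum produced by Lemma $1.4$. The chief bit of care is the identification of each successive quotient $Q_i/Q_{i-1}$ as a $K$-tensor product so that Lemma $1.5$ applies; this is standard given that the variables appearing in $I$ and $J$ are disjoint.
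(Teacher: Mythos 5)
Your proposal is correct and follows essentially the same route as the paper: the same two short exact sequences, the same identifications $IJ/I^2J\cong (I/I^2)\otimes_K J$ and $J^2/IJ^2\cong (A/I)\otimes_K J^2$, and the same use of Lemmas $1.4$ and $1.5$ to eliminate one term of the minimum via the strict hypothesis. The only cosmetic differences are that you derive the \emph{Moreover} clause directly from $\sdepth(R/I^2)\geq\sdepth_A(A/I^2)+s$ rather than by the paper's argument by contradiction, and both give the same conclusion.
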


\begin{proof}
We consider the following short exact sequences:
\[(i)\;\; 0 \longrightarrow J^2/IJ^2 \longrightarrow R/(I^2+IJ) \longrightarrow R/(I+J)^2 \longrightarrow 0. \]
\[(ii)\;\; 0 \longrightarrow IJ/I^2 \longrightarrow R/I^2 \longrightarrow R/(I^2+IJ) \longrightarrow 0, \]

Note that $IJ/I^2J \cong (I/I^2)\otimes_K J$ and $J^2/IJ^2 \cong (A/I)\otimes_K J^2$. Therefore, by Lemma $1.5$,
$\sdepth(IJ/I^2J)\geq \sdepth_A(I/I^2)+\sdepth_B(J)$ and $\sdepth(J^2/IJ^2)\geq \sdepth_A(A/I)+\sdepth_B(J^2)$.

By $(i)$ and Lemma $1.4$, it follows that $\sdepth(R/(I^2+IJ))\geq \min\{ \sdepth(J^2/IJ^2), \linebreak
\sdepth(R/(I+J)^2) \} \geq  \min\{\sdepth_A(A/I)+\sdepth_B(J^2), \sdepth(R/(I+J)^2) \},$
hence we get $(1)$. By $(ii)$ and Lemma $1.4$, it follows that {\small
$$\sdepth(R/I^2)=\sdepth_A(A/I^2)+s\geq  \min\{  \sdepth_A(I/I^2)+\sdepth_B(J), \sdepth(R/(I^2+IJ))\}.$$}
On the other hand, if $\sdepth_A(A/I)$ $>\sdepth_A(A/I^2)$, then, by Lemma $1.4$ and the short exact sequence $0 \rightarrow I/I^2 \rightarrow A/I^2 \rightarrow A/I \rightarrow 0$, it follows that $\sdepth_A(I/I^2)\leq \sdepth_A(A/I^2)$, which contradicts the hypothesis of (2). Thus we are done. 
\end{proof}

In the following, we present an example of a monomial ideal $I\subset S$ with $\sdepth_A(A/I)<\sdepth_A(A/I^2)$, firstly given by J.\ Herzog and T.\ Hibi in \cite{hibi}, in the frame of $\depth$. 

\begin{exm}
\emph{We consider the ideal {\small $I=(x_1^6, x_1^5x_2, x_1x_2^5, x_2^6, x_1^4x_2^4x_3,x_1^4x_2^4x_4,x_1^4x_5^2x_6^3,x_2^4x_6^2x_5^3)$}
 in $A=K[x_1,\ldots,x_6]$. We have $\depth_A(A/I)=0$, $\depth_A(A/I^2)=1$, $\depth_A(A/I^3)=0$ and \linebreak $\depth_A(A/I^4)=\depth_A(A/I^5)=2$.}

\emph{Using $\mathtt{CoCoA}$ \cite{cocoa}, we get $\sdepth_A(A/I)=0$, $\sdepth_A(A/I^2)=1$ and $\sdepth_A(I)=3$.}
\end{exm}

\begin{teor}
If $J\subset B$ is a complete intersection monomial ideal, then for all $n\geq 1$:
$(1)$ $\depth(R/(I+J)^n) = \min_{i\leq n}\{\depth_A(A/I^i) + \dim(B/J)\}$.

$(2)$ $r+\dim(B/J) \geq \sdepth(R/(I+J)^n)\geq \min_{i\leq n}\{\sdepth_A(A/I^i) + \dim(B/J)\}$.

$(3)$ In particular, if $A/I^i$ satisfy the Stanley inequality (or Conjecture $1(1)$), for all $1\leq i\leq n$, then $R/(I+J)^n$ also satisfies the Stanley inequality (or Conjecture $1(1)$).

$(4)$ $\sdepth(R/(I+J)^{n+1})\leq \sdepth(R/(I+J)^n)$,  $\sdepth((I+J)^{n+1})\leq \sdepth((I+J)^n)$ and 
$\sdepth((I+J)^{n+1}/(I+J)^{n+2})\leq \sdepth((I+J)^n/(I+J)^{n+1})$, for all $n\geq 1$.

$(5)$ The numerical functions $n\mapsto \sdepth(R/(I+J)^n)$, $n\mapsto \sdepth((I+J)^n)$ and $n\mapsto \sdepth((I+J)^{n}/(I+J)^{n+1})$ are constant for $n\gg 0$.
\end{teor}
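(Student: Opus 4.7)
The plan is to prove the five parts in order, with Proposition~2.3 and the complete intersection structure of $J$ as the central inputs.

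For part $(1)$, the associated graded algebra $\bigoplus_j J^j/J^{j+1}\cong(B/J)[T_1,\ldots,T_t]$ (since $J$ is a complete intersection) makes each $J^j/J^{j+1}$ a free $B/J$-module with $\depth_B(J^j/J^{j+1})=\depth(B/J)=\dim(B/J)$. Combined with Proposition~2.3 and tensor additivity of depth over $K$, this yields
\[
\depth((I+J)^n/(I+J)^{n+1})=\min_{i+j=n}\{\depth_A(I^i/I^{i+1})\}+\dim(B/J).
\]
Induction on $n$ via the Depth Lemma applied to the short exact sequence $0\to(I+J)^n/(I+J)^{n+1}\to R/(I+J)^{n+1}\to R/(I+J)^n\to 0$, together with the reduction of $\depth_A(I^i/I^{i+1})$ to $\depth_A(A/I^i)$ through the sequences $0\to I^i/I^{i+1}\to A/I^{i+1}\to A/I^i\to 0$, collapses the min to the stated form. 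Part $(2)$ is the direct sdepth analogue: Propositions~2.4, 2.5, 2.7 and Lemmas~1.4, 1.5 replace their depth counterparts; the upper bound $r+\dim(B/J)$ is the general inequality $\sdepth\leq\dim$ applied to $\dim R/(I+J)^n=\dim R/(I+J)=\dim(A/I)+\dim(B/J)\leq r+\dim(B/J)$. Part $(3)$ drops out by comparing $(1)$ and $(2)$ under the respective inequalities for each $A/I^i$.

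Part $(4)$ is the main obstacle, and I would attack it via the graded structure. For CI $J=(v_1,\ldots,v_t)$, multiplication by $v_t$ injects $J^j/J^{j+1}$ into $J^{j+1}/J^{j+2}$ as a direct summand whose complement is the $T_t$-free degree-$(j+1)$ part of $(B/J)[T_1,\ldots,T_t]$. Tensoring with $I^i/I^{i+1}$ and summing over $i+j=n+1$ produces an $R$-module decomposition $(I+J)^{n+1}/(I+J)^{n+2}\cong(I+J)^n/(I+J)^{n+1}\oplus C_n$; since $\sdepth$ of a finite direct sum equals the minimum of the summand sdepths, one obtains $\sdepth((I+J)^{n+1}/(I+J)^{n+2})\leq\sdepth((I+J)^n/(I+J)^{n+1})$. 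For the remaining two monotonicities, I would lift this splitting to the ideals themselves: the identity $((I+J)^{n+1}:v_t)=(I+J)^n$ (immediate for monomials by the disjoint-support property of the CI generators) gives the pushout $(I+J)^{n+1}=v_t(I+J)^n+L$ with $L=(I+(v_1,\ldots,v_{t-1}))^{n+1}$, and I would induct on $t$ while tracing the short exact sequence $0\to v_tL\to v_t(I+J)^n\oplus L\to(I+J)^{n+1}\to 0$ through Lemma~1.4. The delicate point is handling the nontrivial intersection $v_t(I+J)^n\cap L=v_tL$, which forces the splitting to be recovered module-by-module rather than ideal-by-ideal, and deducing the analogous statement for $R/(I+J)^n$ from the SES $0\to(I+J)^n/(I+J)^{n+1}\to R/(I+J)^{n+1}\to R/(I+J)^n\to 0$ using the decomposition just obtained.

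Finally, part $(5)$ is an immediate corollary of $(4)$: each of the three functions is a non-increasing sequence of non-negative integers bounded above (by $r+\dim(B/J)$ for $\sdepth(R/(I+J)^n)$ and $\sdepth((I+J)^n/(I+J)^{n+1})$, and by $r+s$ for $\sdepth((I+J)^n)$), and therefore stabilizes for $n\gg 0$.
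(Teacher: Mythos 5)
There are genuine gaps in parts (2) and (4), and the two places where you are vague are exactly the places where your tools fail. For the lower bound in (2), chaining Propositions 2.4, 2.5 and 2.7 gives $\sdepth(R/(I+J)^n)\geq \min_{i\leq n-1}\{\sdepth_A(I^i/I^{i+1})\}+\dim(B/J)$, i.e.\ a bound in terms of $\sdepth_A(I^i/I^{i+1})$, and your proposed ``reduction of $I^i/I^{i+1}$ to $A/I^i$'' has no sdepth counterpart: Lemma 1.4 is only part (a) of the Depth Lemma, so from $0\to I^i/I^{i+1}\to A/I^{i+1}\to A/I^i\to 0$ you can bound $\sdepth_A(A/I^{i+1})$ from below, but you can never bound $\sdepth_A(I^i/I^{i+1})$ from below by the $\sdepth_A(A/I^j)$. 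The paper avoids this entirely: for $t=1$ it exhibits the explicit multigraded decomposition $R/(I,v)^n=\bigoplus_{\alpha=0}^{n-1}(A/I^{n-\alpha})\otimes_K v^{\alpha}(B/(v))$ and applies Lemmas 1.4 and 1.5 to its pieces, which produces the bound directly in terms of $\sdepth_A(A/I^i)$, and then inducts on $|G(J)|$ by absorbing $(v_1,\dots,v_{t-1})$ into $I$. The same issue infects your part (1): the Depth Lemma alone yields inequalities, and the cases $\depth U\in\{\depth N,\depth N+1\}$ leave the exact value undetermined, so ``collapses the min to the stated form'' is not a proof of the equality.

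Part (4) is worse. Your key step ``$\sdepth$ of a finite direct sum equals the minimum of the summand sdepths'' is only known in the direction $\geq$ (a special case of Lemma 1.4); the reverse inequality $\sdepth(M\oplus C)\leq\sdepth(M)$ is an open problem, since a Stanley space of $M\oplus C$ need not project to a Stanley space of $M$. Likewise, feeding the Mayer--Vietoris sequence $0\to v_tL\to v_t(I+J)^n\oplus L\to(I+J)^{n+1}\to 0$ into Lemma 1.4 only bounds the middle term from below and can never produce the upper bounds on $\sdepth((I+J)^{n+1})$ that (4) requires. Ironically, you prove the one fact that does the job --- $((I+J)^{n+1}:v_t)=(I+J)^n$ --- and then discard it: the paper reduces to $J=(u)$ principal, notes $((I,u)^{n+1}:u)=(I,u)^n$ since $u$ is regular modulo $IR$, and concludes by the known monotonicity of $\sdepth$ under passing to colon ideals (\cite[Proposition 2.11]{cimp}), which gives all three inequalities of (4) at once. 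Your parts (3) and (5) are fine, and your upper bound in (2) via $\sdepth(R/(I+J)^n)\leq\dim(R/(I+J))=\dim(A/I)+\dim(B/J)$ is a legitimate (in fact sharper) alternative to the paper's Stanley-space counting argument.
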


\begin{proof}
Assume $G(J)=\{v_1,\ldots,v_t\}$, where $1\leq t\leq s$.
Note that $\depth_B(B/J^n)=s-t = \dim(B/J)$. We use induction on $t\geq 1$. If $t=1$, we set $v=v_1$. Then $(I+J)^n=(I,v)^n = I^n + vI^{n-1} + \cdots + v^{n-1}I + (v^n)$. We claim that 
\[(i)\;\;
R/(I,v)^n= \bigoplus_{\alpha=0}^{n-1} \bigoplus_{v^{\alpha}|u,\;v^{\alpha+1}\nmid u} (A/I^{n-\alpha})u \otimes_K B/(v^{\alpha}).
\]
Indeed, let $w\in R \setminus (I,v)^n$ be a monomial. Let $\alpha:=\max\{j\geq 0\;:\;v^j|u\}$ and write $w=v^{\alpha}u$. Note that $\alpha<n$ and $v\nmid u$. 
Note that $u\notin I^{n-\alpha}$, otherwise $w\in I^{n-\alpha}v^n \subset (I,v)^n$, a contradiction. Thus, we proved our claim.

Note that $\sdepth_B(B/(v^n))=\depth_B(B/(v^n))=s-1$. By Lemma $1.3$ and $(i)$, it follows $(1)$, in the case $t=1$. Also, by Lemma $1.4$, Lemma $1.5$ and $(i)$, we get $(2)$, for $t=1$.

Now, assume $t>1$. Let $A'=A[x_i\;:\;x_i\nmid v_t]$, $B'=K[x_i\;:\;x_i|v_t]$ and $B''=K[x_i\;:\;i>r\;,x_i\nmid v_t]$. 
Let $J'=(v_t)\subset B'$ and $J''=(v_1,\ldots,v_{t-1})\subset B''$. We can write $I+J = I'+J'$, where $I' = I+ J''\subset A'$. By induction, we get $ \depth_{A'}(A'/I'^n) = \min_{i\leq n}\{ \depth_A(A/I^i) + \dim(B''/J'') \}$. 

On the other hand, by the first step of induction, we get $\depth(R/(I'+J')^n) = \min_{i\leq n} \{ \depth_{A'}(A'/I'^i) + \dim(B'/J') \}$. Since $\dim(B/J)=\dim(B'/J')+\dim(B''/J'')$, we get $(1)$. 

Similarly, we get $\sdepth(R/(I+J)^n)\geq \min_{i\leq n}\{\sdepth_A(A/I^i) + \dim(B/J)\}$.

If $uK[Z]\subset R/(I+J)^n$ is a Stanley space, as in the proof of Proposition $2.5$, for each $1\leq j \leq t$ there exists some $r+1\leq i_j\leq r+s$ such that $x_{i_j}|v_j$ and $x_{i_j}\notin Z$. Since the monomials $v_j$`s have disjoint supports, it follows that $|Z|\leq r+s-t$ and therefore $\sdepth(R/(I+J)^n)\leq r+s-t = r+\dim(B/J)$, as required.

$(3)$ Follows immediately from $(2)$.

$(4)$ Using induction on the number of generators of $J$, it is enough to assume that $J=(u)$ is principal. Since $u$ is regular on $R/IR$, it follows that $((I,u)^{n+1}:u)=(I,u)^n$. Therefore, by \cite[Proposition 2.11]{cimp}, we are done.

$(5)$ Follows immediately from $(4)$.
\end{proof}

A direct consequence of Theorem $2.11$ is the following Corollary.

\begin{cor}
If $J\subset B$ is a complete intersection monomial ideal minimally generated by $t$ monomials, 
then $\sdepth_B(B/J^n)= \depth_B(B/J^n)=s-t$, for all $n\geq 1$. $\Box$
\end{cor}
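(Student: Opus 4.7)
The plan is to read Corollary 2.12 as the degenerate case of Theorem 2.11 obtained by taking $A = K$ and $I = (0)$ (equivalently, $r = 0$). Under this specialization one has $R = B$, $(I+J)^n = J^n$, and $\depth_A(A/I^i) = \sdepth_A(A/I^i) = 0$ for every $i$, so the statement of Theorem 2.11 reads directly as an assertion about $B/J^n$. Inspection of the proof of Theorem 2.11, which proceeds by induction on the number $t$ of minimal generators of $J$ and relies only on Proposition 2.5, Lemma 1.4, and Lemma 1.5, confirms that none of its steps is sensitive to whether $r > 0$ or $I \neq 0$.

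Substituting these values, Theorem 2.11(1) yields
\[ \depth_B(B/J^n) = \min_{i\leq n}\{\depth_A(A/I^i) + \dim(B/J)\} = 0 + (s-t) = s-t, \]
and Theorem 2.11(2) yields
\[ s-t \;=\; 0 + \dim(B/J) \;\leq\; \sdepth_B(B/J^n) \;\leq\; r + \dim(B/J) \;=\; s-t, \]
forcing $\sdepth_B(B/J^n) = s-t$, which is the content of the corollary.

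The only real obstacle is justifying this direct application of Theorem 2.11 in a setting outside the running assumption that $I$ be a nonzero proper ideal in a nontrivial ring $A$. Should one wish to sidestep this formality, a self-contained proof of the $\sdepth$ equality is immediate from the earlier results. The upper bound $\sdepth_B(B/J^n) \leq s-t$ is the verbatim Stanley-space argument of Proposition 2.5: any Stanley space $uK[Z] \subset B/J^n$ must omit at least one variable from $\supp(v_j)$ for each $j$, since otherwise $uv_j^n \in uK[Z] \cap J^n$ is a contradiction, and the disjointness of the supports of the $v_j$'s then bounds $|Z|$ by $s - t$. The lower bound follows by induction on $n$ from Lemma 1.4 applied to the short exact sequence $0 \to J^n/J^{n+1} \to B/J^{n+1} \to B/J^n \to 0$, together with $\sdepth(J^n/J^{n+1}) = s-t$ from Proposition 2.5; the base case $\sdepth_B(B/J) = s-t$ is the $n = 0$ instance of Proposition 2.5. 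The depth equality is handled analogously via the Depth Lemma 1.3, or alternatively by the tensor-product decomposition $B/J^n \cong (B'/J'^n) \otimes_K B''$ arising from the disjoint-support structure of a monomial complete intersection.
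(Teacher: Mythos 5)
Your proposal is correct and matches the paper's approach: the paper gives no proof of Corollary 2.12 beyond stating that it is a direct consequence of Theorem 2.11, which is precisely your specialization $A=K$, $I=(0)$, $r=0$. Your supplementary self-contained argument (the Stanley-space upper bound from the proof of Proposition 2.5, plus induction on $n$ via Lemma 1.4 applied to $0\to J^n/J^{n+1}\to B/J^{n+1}\to B/J^n\to 0$) properly addresses the only real formal wrinkle, namely that the paper's standing hypotheses assume $r\geq 1$ and $I$ nonzero.
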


\begin{cor}
Let $L\subset R$ be a monomial ideal with $G(L)=\{v_1,\ldots,v_m,v\}$. Assume there exists a monomial $w\in A$ such that $gcd(v,v_i)=w$ for all $i\leq m$ and $v/w \in B$. Then, $\depth(R/L^n)\leq \depth(R/L^{n+1})$ for all $n\geq 1$.
\end{cor}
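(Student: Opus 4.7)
The plan is to reduce the corollary to an application of \cite[Proposition 2.11]{cimp}, paralleling the strategy in the proof of Theorem $2.11(4)$. The central technical step is the colon-ideal identity $(L^{n+1}:v)=L^n$, after which the depth comparison follows directly from the cited result applied with the monomial $v$.

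First I would set $u:=v/w$, so that $v=wu$ with $w\in A$ and $u\in B$ involving disjoint variable sets. The hypothesis $\gcd(v_i,v)=w$ forces $w\mid v_i$, and writing $v_i=wv_i'$ translates that hypothesis into $\gcd(v_i',u)=1$. Since $w$ and $u$ are supported on disjoint variables, this also gives $\gcd(v_i,u)=1$. Putting $I:=(v_1,\ldots,v_m)$ and $I':=(v_1',\ldots,v_m')$, the factorisation $v_i=wv_i'$ yields $I=wI'$, and hence the key identity $I^n=w^nI'^n$ for every $n\geq 1$.

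Next I would verify $L^{n+1}:v=L^n$, where $L=I+(v)$. The inclusion $L^n\subset L^{n+1}:v$ is immediate from $v\in L$. For the reverse, take a monomial $\alpha$ with $v\alpha\in L^{n+1}=\sum_{j=0}^{n+1}v^jI^{n+1-j}$ and determine when a monomial generator $v^jg$ (with $g$ a product of $n+1-j$ of the $v_i$'s) divides $v\alpha$. For $j\geq 1$ the divisibility reduces to $v^{j-1}g\mid\alpha$, whose contribution is exactly $\sum_{j=1}^{n+1}v^{j-1}I^{n+1-j}=L^n$. For $j=0$, I would factor $g=w^{n+1}g'$ with $g'\in I'^{n+1}$ and use $\gcd(g',u)=1$ together with $\gcd(w,u)=1$ to reduce $g\mid wu\alpha$ to $w^ng'\mid\alpha$, which places $\alpha$ in $w^nI'^{n+1}\subset w^nI'^n=I^n\subset L^n$. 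Both cases give $\alpha\in L^n$, so $L^{n+1}:v=L^n$.

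Finally, applying \cite[Proposition 2.11]{cimp} to the monomial $v$ delivers the desired inequality $\depth(R/L^n)\leq\depth(R/L^{n+1})$. The main obstacle is the $j=0$ bookkeeping in the colon computation: since $v=wu$ absorbs only a single factor of $w$, the remaining $w^n$ must be handed off to $\alpha$, and this works only because every $v_i$ carries the whole of $w$ as its $A$-part. It is precisely the uniform factorisation $\gcd(v_i,v)=w$ together with the $B$-placement $v/w\in B$ that make this step go through, and this is the content-bearing use of the hypothesis.
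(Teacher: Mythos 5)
Your route is genuinely different from the paper's. The paper's proof is shorter: since $w$ divides every generator of $L$, one has $L^n=w^n(L:w)^n$ and hence $\depth(R/L^n)=\depth(R/(L:w)^n)$, so one may assume $w=1$; after repartitioning the variables so that $(L:w)=I'+(v/w)$ with $I'$ and $(v/w)$ supported on disjoint variable sets, Theorem $2.11(1)$ gives the exact value $\depth(R/L^n)=\min_{i\leq n}\{\depth(A'/I'^i)\}+\dim(B'/(v/w))$, and the monotonicity is read off from the fact that the minimum is taken over a growing index set. Your colon-ideal argument instead mirrors the paper's proof of Theorem $2.11(4)$, and your verification of $(L^{n+1}:v)=L^n$ is correct: the $j\geq 1$ generators colon down exactly to $L^n$, and in the $j=0$ case the uniform factorisation $v_i=wv_i'$ with $\gcd(v_i',v/w)=1$ lets you strip off $u=v/w$ and land in $w^nI'^{n+1}\subset I^n$. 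Your approach buys something the paper's does not: it immediately yields the same monotonicity for $\sdepth(R/L^n)$, $\sdepth(L^n)$, etc., whereas the formula-based proof only concerns $\depth$.

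There is, however, a direction error in your final step. The colon identity is used via the fact that $\depth(S/(M:u))\geq\depth(S/M)$ for a monomial $u$ (this is exactly how \cite[Proposition 2.11]{cimp} is deployed in Theorem $2.11(4)$, whose conclusion is $\sdepth(R/(I+J)^{n+1})\leq\sdepth(R/(I+J)^n)$). Combined with $(L^{n+1}:v)=L^n$ this gives $\depth(R/L^n)=\depth(R/(L^{n+1}:v))\geq\depth(R/L^{n+1})$, i.e.\ the function is non-increasing --- the opposite of what you assert in your last sentence. Note that the paper's own proof also produces $\depth(R/L^{n+1})\leq\depth(R/L^n)$, since $\min_{i\leq n+1}\leq\min_{i\leq n}$; the ``$\leq$'' in the printed statement of the corollary appears to be a misprint for ``$\geq$''. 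So your machinery does prove the intended (corrected) statement, but the claim that the cited proposition ``delivers $\depth(R/L^n)\leq\depth(R/L^{n+1})$'' is not what it gives. You should also be aware that \cite[Proposition 2.11]{cimp} is invoked in this paper for $\sdepth$; for the $\depth$ version you need the analogous classical inequality $\depth(S/(M:u))\geq\depth(S/M)$ (see e.g.\ \cite{trung}) if the cited proposition does not already cover depth.
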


\begin{proof}
Let $n\geq 1$. Note that $L^n=w^n(L:w)^n$ and therefore $\depth(R/L^n)=\depth(R/(L:w)^n)$. Thus, we can assume that $w=1$. We write $L=I+J$, where $I=(v_1,\ldots,v_m)\subset A$ and $J=(v)\subset B$. We apply Theorem $2.11(1)$ and we get the required conclusion.
\end{proof}

A \emph{semilattice} $L$ is a partial ordered set $(L,\leq)$ such that, for any $P,Q \in L$, there is a unique least upper bound
$P \vee Q$ called the \emph{join} of $P$ and $Q$. Let $L,L'$ be two semilattices. A \emph{join-preserving} map $\varphi:L \rightarrow L'$ is a map with $\varphi(P \vee Q) = \varphi(P) \vee \varphi(Q)$.

Let $S$ be a polynomial ring in $n\geq 1$ indeterminates.
The \emph{lcm-semilattice} $L_G$ of a finite set $G\subset S$ of monomials is defined
as the set of all monomials that can be obtained as the least common multiple ($\lcm$) of
some non-empty subset of $G$, ordered by divisibility. If $I\subset S$ is a monomial ideal, we define the lcm-lattice of $I$, 
$L_I:=L_{G(I)}$. Note that $u \vee v = \lcm(u,v)$, for any monomials $u,v\in L_I$.
See \cite{ichim1} for further details. 

\begin{prop}
Let $J\subset B$ be a monomial complete intersection ideal with $|G(J)|=t\leq s$. Then $s-t+1 \leq \sdepth(J^k)\leq s-t+
\left\lceil \frac{t}{k+1}\right\rceil$, for all $k\geq 1$.

In particular, if $k\geq t-1$, then $\sdepth(J^k)=s-t+1$.
\end{prop}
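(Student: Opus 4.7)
The plan is to prove the two inequalities separately; the particular case then follows since $\lceil t/(k+1)\rceil=1$ whenever $k\geq t-1$, making the two bounds coincide in that range.

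For the lower bound $\sdepth(J^k)\geq s-t+1$, I would induct on $t=|G(J)|$. The base $t=1$ is trivial: $J^k=(v_1^k)$ is principal, so $\sdepth(J^k)=s$. For $t>1$, write $J=J_0+J_1$ with $J_0=(v_1,\ldots,v_{t-1})$ in a smaller polynomial subring $B_0$ and $J_1=(v_t)\subset B_1=K[\supp(v_t)]$, so that $B=B_0\otimes_K B_1$. Applying Proposition $2.6$ bounds $\sdepth(J^k)$ below by the minimum of $\sdepth(J_0^k)$ and the $k$ terms $\sdepth(J_0^{k-i}J_1^i/J_0^{k-i+1}J_1^i)$ for $1\leq i\leq k$. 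The term $\sdepth(J_0^k)$ is $\geq s-t+1$ by the inductive hypothesis in $B_0$ together with Lemma $1.5$ applied to $J_0^k\cdot B=J_0^k\otimes_K B_1$. For $1\leq i\leq k-1$ each term equals $\sdepth((J_0^{k-i}/J_0^{k-i+1})\otimes_K J_1^i)$; combining Proposition $2.5$ (which gives $\sdepth(J_0^{k-i}/J_0^{k-i+1})=\dim(B_0/J_0)$) with Lemma $1.5$ (using $\sdepth(J_1^i)=|\supp(v_t)|$, since $J_1^i=(v_t^i)$ is principal) yields at least $s-t+1$. The $i=k$ term reduces via $v_t^k(B/J_0)\cong B/J_0$ to $\sdepth_B(B/J_0)$, which is $\geq s-t+1$ by Corollary $2.12$ and Lemma $1.5$.

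For the upper bound, I would analyse any Stanley decomposition of $J^k$ achieving $\sdepth=d$. The $\binom{k+t-1}{t-1}$ boundary monomials $m_a:=v_1^{a_1}\cdots v_t^{a_t}$ with $|a|=k$ each must sit in their own Stanley space $m_aK[W_a]$: if $u\mid m_a$ and $u\in J^k$, disjointness of the $\supp(v_i)$'s forces $u=\prod v_i^{c_i}$ with $c_i\leq a_i$ and $\sum c_i\geq k=\sum a_i$, hence $u=m_a$. Setting $T_a:=\{i\,:\,\supp(v_i)\subseteq W_a\}\subseteq[t]$, the argument from the proof of Proposition $2.5$ shows $|W_a|\leq s-t+|T_a|$, so $|T_a|\geq d-(s-t)$. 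A direct check on monomial coordinates shows that pairwise disjointness of $m_aK[W_a]$ and $m_bK[W_b]$ is equivalent to $\{i:a_i>b_i\}\not\subseteq T_b$ or $\{i:b_i>a_i\}\not\subseteq T_a$. These are exactly the combinatorial constraints that govern Stanley decompositions of $(y_1,\ldots,y_t)^k\subset K[y_1,\ldots,y_t]$, and the known upper bound $\sdepth((y_1,\ldots,y_t)^k)\leq\lceil t/(k+1)\rceil$ for the graded maximal ideal then forces $\min_a|T_a|\leq\lceil t/(k+1)\rceil$, giving $d\leq s-t+\lceil t/(k+1)\rceil$. The main obstacle is verifying, for $v_i$ with multivariate $\supp(v_i)$, that the partial supports $W_a\cap\supp(v_i)$ for $i\notin T_a$ do not introduce additional disjointness constraints, so that the reduction to the $(y_1,\ldots,y_t)^k$ combinatorics is clean.
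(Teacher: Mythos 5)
Your lower bound is correct, though it takes a different and longer route than the paper: you run an induction on $t$ through the filtration of Proposition 2.6, using Proposition 2.5, Corollary 2.12 and Lemma 1.5 to handle the successive quotients $J_0^{k-i}J_1^i/J_0^{k-i+1}J_1^i\cong(J_0^{k-i}/J_0^{k-i+1})\otimes_K J_1^i$. That all checks out. The paper instead obtains both bounds in one stroke: the map $\varphi:L_{\me^k}\to L_{J^k}$, $x_1^{k_1}\cdots x_t^{k_t}\mapsto v_1^{k_1}\cdots v_t^{k_t}$ (where $\me=(x_1,\ldots,x_t)$), is a join-preserving bijection of lcm-lattices because the $v_j$ have disjoint supports, so \cite[Theorem 4.5]{ichim1} gives the exact identity $\sdepth(J^k)=s-t+\sdepth(\me^k)$, and then $1\leq\sdepth(\me^k)\leq\lceil t/(k+1)\rceil$ from \cite[Theorem 2.2]{cim} finishes everything, including the ``in particular'' clause.

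Your upper bound has a genuine gap, but not the one you flag. The worry about partial supports $W_a\cap\supp(v_i)$ for $i\notin T_a$ is unfounded: since $\lcm(m_a,m_b)/m_a=\prod_{i:b_i>a_i}v_i^{b_i-a_i}$, the intersection $m_aK[W_a]\cap m_bK[W_b]$ is nonempty exactly when $\{i:b_i>a_i\}\subseteq T_a$ and $\{i:a_i>b_i\}\subseteq T_b$, so the disjointness criterion does translate verbatim to the spaces $y^aK[\{y_i:i\in T_a\}]$ in $K[y_1,\ldots,y_t]$. The real problem is the final step: what you have produced in $K[y_1,\ldots,y_t]$ is a family of \emph{pairwise disjoint} Stanley spaces based at the generators of $\me^k$, not a Stanley \emph{decomposition} of $\me^k$ --- nothing guarantees these spaces, together with others of dimension $\geq\min_a|T_a|$, cover all of $\me^k$. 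The bound $\sdepth(\me^k)\leq\lceil t/(k+1)\rceil$ from \cite[Theorem 2.2]{cim} is a statement about genuine decompositions, so you cannot invoke it to conclude $\min_a|T_a|\leq\lceil t/(k+1)\rceil$ without either (a) checking that the proof of that bound uses only pairwise disjointness of the generator spaces, or (b) completing your family to an actual decomposition of sdepth $\geq\min_a|T_a|$; neither is done. This transfer of an entire Stanley decomposition, rather than just its generator spaces, is precisely what the lcm-lattice theorem \cite[Theorem 4.5]{ichim1} accomplishes, which is why the paper routes the argument through it.
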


\begin{proof}
Let $\mathbf m=(x_1,\ldots,x_t)\subset S:=K[x_1,\ldots,x_t]$. Suppose that $J=(v_1,\ldots,v_t)$ and let $k\geq 1$.
Since $v_j$'s are monomials with disjoint supports, it follows that $L_{I^k}$ contains all the monomials of the form 
$v_1^{k_1}\cdots v_t^{k_t}$ with $k_i\geq 0$ and $1\leq k_1+\cdots+k_t\leq k$. Also, one can easily see that $\varphi:L_{\mathbf m^k} \rightarrow L_{I^k}$, defined by $\varphi(x_1^{k_1}\cdots x_t^{k_t}):=v_1^{k_1}\cdots v_t^{k_t}$, is bijective and join preserving.
Thus, according to \cite[Theorem 4.5]{ichim1}, we have $\sdepth(I^k)=r-t+\sdepth(\mathbf m^k)$. On the other hand, according to
\cite[Theorem 2.2]{cim}, $1\leq \sdepth(\mathbf m^k)\leq \left\lceil \frac{t}{k+1}\right\rceil$. 
\end{proof}

According to \cite[Theorem 1.2]{hibi}, if $J\subset B$ is a homogeneous ideal, then there exists a constant $c\geq 0$, such that $\depth_B(B/J^n) = \depth_B(J^n/J^{n+1}) = c$, for $n\gg 0$. One may ask if a similar result is true in the case of $\sdepth$. As a consequence of Proposition $2.5$, Corollary $2.12$ and Proposition $2.14$, we get the following theorem, which give a particular case when the answer is positive.

\begin{teor}
If $J\subset B$ is a complete intersection monomial ideal, then:

$(1)$ $\sdepth_B(B/J^n) =\sdepth_B(J^n/J^{n+1}) = \dim(B/J)$, for all $n\geq 0$.

$(2)$ $\sdepth_B(J^n)=\dim(B/J)+1$, for all $n\geq s-\dim(B/J)-1$. $\Box$
\end{teor}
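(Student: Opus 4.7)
The plan is simply to assemble the three stated equalities from results already established in the paper, so the proof is essentially a reformulation. Set $t := |G(J)|$, so that the complete intersection hypothesis gives $\dim(B/J) = s-t$.

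For part (1), the equality $\sdepth_B(J^n/J^{n+1}) = \dim(B/J)$ is exactly the content of Proposition $2.5$, and the equality $\sdepth_B(B/J^n) = \dim(B/J)$ is exactly the content of Corollary $2.12$, which asserts $\sdepth_B(B/J^n) = s-t$. I would note the case $n=0$ separately: $\sdepth_B(B/J^0) = \sdepth_B(0)$ is conventionally $+\infty$, or one checks that the statements for $n=0$ reduce to $\sdepth_B(B/J) = \dim(B/J)$ and $\sdepth_B(B) \geq \dim(B/J)$, both of which are immediate, so the interesting content is $n\geq 1$ where the cited results apply directly.

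For part (2), the task is to translate the index bound from Proposition $2.14$. That result gives $\sdepth(J^k) = s-t+1$ whenever $k \geq t-1$. Rewriting $t = s - \dim(B/J)$, the condition $k \geq t-1$ becomes $k \geq s - \dim(B/J) - 1$, and the conclusion $\sdepth(J^k) = s-t+1$ becomes $\sdepth_B(J^k) = \dim(B/J) + 1$, as claimed.

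Since each assertion is a direct quotation of an earlier result with the substitution $t = s - \dim(B/J)$, there is no substantive obstacle; the only thing to be careful about is matching notation (the bound $n \geq s - \dim(B/J)-1$ in the statement is exactly $n \geq t-1$) and, if one wishes to be pedantic, treating the edge case $n=0$ in part (1). No further lemmas or constructions are needed.
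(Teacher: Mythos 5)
Your proposal is correct and matches the paper exactly: the theorem carries a $\Box$ and is introduced as "a consequence of Proposition $2.5$, Corollary $2.12$ and Proposition $2.14$," which is precisely the assembly you describe, including the substitution $t=s-\dim(B/J)$ for part (2). Your remark about the $n=0$ edge case is a reasonable extra precaution that the paper itself glosses over.
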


\vspace{2mm} \noindent {\footnotesize
\begin{minipage}[b]{15cm}
Mircea Cimpoea\c s, Simion Stoilow Institute of Mathematics, Research unit 5, P.O.Box 1-764,\\
Bucharest 014700, Romania, E-mail: mircea.cimpoeas@imar.ro
\end{minipage}}

\end{document}